\theoremstyle{plain}
\newtheorem{theorem}{Theorem}[section]
\newtheorem{lem}[theorem]{Lemma}
\newtheorem{pro}[theorem]{Proposition}
\newtheorem{Def}[theorem]{Definition}
\numberwithin{equation}{section}
\newcommand{\ts}{tangent space}
\newcommand{\hym}{hyperbolic metric}
\newcommand{\nb}{normal bundle}
\newcommand{\scv}{solution curve}
\newcommand{\WPm}{Weil-Petersson metric}
\newcommand{\TS}{Teichm\"{u}ller space}
\newcommand{\kg}{Kleinian group}
\newcommand{\mc}{mean curvature}
\newcommand{\pc}{principal curvature}
\newcommand{\maxp}{maximum principle}
\newcommand{\hqd}{holomorphic quadratic differential}
\newcommand{\RS}{Riemann surface}
\newcommand{\sff}{second fundamental form}
\newcommand{\gse}{Gauss equation}
\newcommand{\af}{almost Fuchsian}
\newcommand{\qf}{quasi-Fuchsian}
\newcommand{\tm}{three-manifold}
\newcommand{\htm}{hyperbolic three-manifold}
\newcommand{\ms}{minimal surface}
\newcommand{\mi}{minimal immersion}
\newcommand{\cs}{conformal structure}
\newcommand{\cc}{conformal change}
\newcommand{\cvc}{convex core}
\newcommand{\is}{incompressible surface}
\newcommand{\lo}{linearized operator}
\newcommand{\dio}{the differential operator}
\newcommand{\C}{\mathbb{C}}
\newcommand{\R}{\mathbb{R}}
\newcommand{\I}{\mathbb{I}}
\renewcommand{\H}{\mathbb{H}}
\newcommand{\Fcal}{\mathcal{F}}
\DeclareMathOperator{\Iso}{Iso}
\DeclareMathOperator{\PSL}{PSL}
\numberwithin{equation}{section}
\def\XXint#1#2#3{{\setbox0=\hbox{$#1{#2#3}{\int}$}
    \vcenter{\hbox{$#2#3$}}\kern-.5\wd0}}
\def\@citestyle{\m@th\upshape\mdseries}
\def\citeform#1{{\bfseries#1}}
\def\@cite#1#2{{%
  \@citestyle[\citeform{#1}\if@tempswa, #2\fi]}}
\let\csname cite \endcsname\cite
  \edef\cite{\@nx\protect\@xp\@nx\csname cite \endcsname}%
\begin{document}

\title
{Minimal Immersions of Closed Surfaces in Hyperbolic Three-Manifolds}

\author{Zheng Huang}
\address{Department of Mathematics,
The City University of New York,
Staten Island, NY 10314, USA.}
\email{zheng.huang@csi.cuny.edu}

\author{Marcello Lucia}
\address{Department of Mathematics,
The City University of New York,
Staten Island, NY 10314, USA.}
\email{mlucia@math.csi.cuny.edu}

\date{August 22, 2010}

\subjclass[2000]{Primary 53C21, Secondary 53A10, 35J62}

\begin{abstract}
We study {\mi}s of closed surfaces (of genus $g \ge 2$) in hyperbolic {\tm}s, with prescribed data
$(\sigma, t\alpha)$, where $\sigma$ is a {\cs} on a topological surface $S$, and $\alpha dz^2$ is
a {\hqd} on the marked {\RS} $(S,\sigma)$. We show that, for each $t \in (0,\tau_0)$ for some
$\tau_0 > 0$, depending only on $(\sigma, \alpha)$, there are at least two {\mi}s of closed surface 
of prescribed {\sff} $Re(t\alpha)$ in the {\cs} $\sigma$. Moreover, for $t$ sufficiently large, there 
exists no such {\mi}. Asymptotically, as $t \to 0$, the {\pc}s of one {\mi}
tend to zero, while the intrinsic curvatures of the other blow up in magnitude.
\end{abstract}

\maketitle
\section {Introduction}
A fundamental problem in hyperbolic geometry is the interaction between the hyperbolic structures of
closed surfaces and those of {\tm}s. Minimal surface theory has been intimately related to the
geometry and topology of three-manifolds (see for instance \cite{SY79, MY82}). It follows from
\cite{FHS83} that any incompressible surface can be isotoped to a {\ms} in a closed Riemannian
three-manifold. Hence one expects any {\htm} to be obtained by gluing pieces of the type
$S \times (-a,a)$, for some $a > 0$, with $S$ minimal.

Minimal surfaces play important roles in understanding the structures of $3$-manifolds (see for
example \cite{SU82} and recent series of work by Colding-Minicozzi \cite{CM04a, CM04b, CM04c,
CM04d}). Closed surfaces (compact without boundary) cannot be minimally embedded in $\R^3$. In
the positive curvature case, the situation is quite different, since Lawson proved in \cite{LJ70} that every
compact orientable surface can be minimally immersed in the sphere $S^3$. The case of {\mi}s in
{\htm}s is much more subtle, and has been studied by several authors (see for example \cite{Tau04,
Rub05, Has05}). In particular, Uhlenbeck in~\cite{Uhl83} has undertaken a program to parametrize a
class of {\htm}s by incompressible minimal surfaces.

The goal of this paper is to investigate closed {\ms}s of genus at least two immersed in hyperbolic {\tm}s,
and to prove several results inspired by Uhlenbeck's approach. These surfaces admit {\hym}s via the
uniformization theorem, and the {\cc} function between the unique {\hym} on a marked surface
$(S,\sigma)$ and the induced metric from the immersion into a hyperbolic {\tm} satisfies the {\gse}, which
is a semilinear elliptic equation, and we study the solution curve to a family of {\gse}s and their
geometrical implications.

Throughout the paper, we assume $S$ as a closed oriented surface of genus $g \geq 2$. {\TS} of $S$ is
denoted by $T_g(S)$, and it is the space of {\cs}s (or equivalently {\hym}s) on $S$ such that two {\cs}s
$\sigma$ and $\rho$ are equivalent if there is an orientation-preserving diffeomorphism in the homotopy
class of the identity between them. Let $(S,\sigma)$ be the surface $S$ marked with the {\cs}
$\sigma \in T_g(S)$, and $z = x+iy$ be the conformal coordinates on $(S,\sigma)$, we denote the unique
{\hym} on $(S,\sigma)$ by $g_\sigma dzd\bar{z}$. When $(S,\sigma)$ is immersed in some {\htm} $M$,
its induced metric from the immersion by $f(z)dzd\bar{z}$ with
$h = h_{11}dx^2+2h_{12}dxdxy+h_{22}dy^2$ the {\sff}. Then it is well-known that (\cite{Hop89,LJ70}) the form
$\alpha = (h_{11} - ih_{12})dz^2$ is a {\hqd} and $h = Re(\alpha)$.

From the prescribed data $(\sigma,t\alpha)$, our goal is to construct {\htm}s such that the closed surface
$S$ is minimally immersed. In this aspect, our main result should be considered as a ``local realization
theorem". This {\mi} is governed by six equations: Three of them are in the form of curvature relation
since we require the {\nb} as a three-manifold is hyperbolic: $R_{i3j3} = -g_{ij}$ (see \eqref{ce}). They can
be reduced to a system of ODEs and they determine the metric explicitly in the {\nb} of the {\ms} $S$
(see \S 2.2); Two more relations are provided by the Codazzi equations: $R_{ijk3} =0$, and they ensure the requirement
for the prescribed {\sff} (also \S 2.2). The last equation is the {\gse} which ensures the {\mi}s stay in the
prescribed {\cs} $\sigma$. Using these equations, we build a {\htm}, topologically $S \times (-a,a)$, for
$a \in (0,\infty]$, around the {\ms} $S$. Among these equations, probably the {\gse} is the most intriguing.
The conformal factor between the induced metric $f(z)dzd\bar{z}$ and the {\hym} $g_\sigma dzd\bar{z}$
on $(S,\sigma)$ can be represented via $f(z) = e^{2u(z)}g_\sigma(z)$, and the {\gse} is an elliptic
semilinear equation given by
\begin{equation}\label{ge}
\Delta u + 1 -e^{2u} - \frac{|\alpha|^2}{g_\sigma^2}e^{-2u} = 0,
\end{equation}
where $\Delta$ is the Laplacian in the {\hym} $g_\sigma dzd\bar{z}$.

%
%

\begin{Def}
We call $S(\sigma,\alpha)$ a {\mi} with data $(\sigma,\alpha)$ if $S$ is marked by a {\cs} $\sigma \in T_g(S)$
and $S$ is a {\mi} whose {\sff} is given by $Re(\alpha)$, for $\alpha \in Q(\sigma)$.
\end{Def}

We consider a ray $t\alpha(z)dz^2$, for a fixed direction $\alpha \in Q(\sigma)$, and $t \ge 0$. Note that the
space of {\hqd}s, $Q(\sigma)$, on $(S,\sigma)$ is identified as the co{\ts} of {\TS} at the point
$\sigma \in T_g$, therefore $t\alpha$ represents a ray in $Q(\sigma)$, and this ray is closely related to the
notion of Teichm\"{u}ller geodesics in {\TS}. The data $(\sigma,\alpha)$ is a point in the cotangent bundle
$T_g(S) \times Q(\sigma)$, where $Q(\sigma)$ is a Banach space of real dimension $6g-6$. This ray
enables us to study the one-parameter family of {\gse}s:
\begin{equation}\label{t-ge}
\Delta u(t) + 1 -e^{2u(t)} - \frac{t^2|\alpha|^2}{g_\sigma^2}e^{-2u(t)} = 0,
\end{equation}
for {\mi}s $S(\sigma,t\alpha)$.

Using the implicit function theorem, Uhlenbeck (\cite{Uhl83}) proved the existence of a smooth solution curve to
the equation \eqref{t-ge}:
\vskip 0.1in
\noindent
{\bf Theorem} \cite {Uhl83}. {\it Fixing a {\cs} $\sigma \in T_g(S)$, and $\alpha \in Q(\sigma)$, there exists a
constant $\tau_0 > 0$, depending only on $(\sigma, \alpha)$, such that for each $t \in [0,\tau_0]$, there is a
stable {\mi} of $S$ with data $(\sigma,t\alpha)$ into some {\htm}.
}
\vskip 0.1in
Our main result in this paper is to obtain an additional solution for each Uhlenbeck's nonzero stable solution to 
the {\gse} in this paper, which can be formulated as the following theorem:

\begin{theorem} Let $S$ be a closed surface and $\sigma \in T_g(S)$ be a {\cs} on $S$. If
$\alpha \in Q(\sigma)$ is a {\hqd} on $(S,\sigma)$, then:
\begin{enumerate}
\item
for sufficiently large $t$, the {\gse} \eqref{t-ge} admits no solutions, i.e., there is no {\mi} of $S$ with data
$(\sigma,t\alpha)$ into some {\htm}s;
\item
there exists a constant $\tau_0 > 0$, such that, for each $t \in (0,\tau_0)$, there exist at least two {\mi}s of $S$
into some {\htm} in the conformal class of $\sigma$ with the {\sff} $Re(t\alpha)$.
\end{enumerate}
\end{theorem}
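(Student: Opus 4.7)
\emph{Plan.} Both parts will be obtained from variational analysis of the functional
\[
E_t(u) := \int_S \left(\tfrac{1}{2}|\nabla u|^2 - u + \tfrac{1}{2}e^{2u} - \tfrac{1}{2} t^2 \frac{|\alpha|^2}{g_\sigma^2} e^{-2u}\right) dA_{g_\sigma}
\]
on $H^1(S)$, whose Euler-Lagrange equation is precisely \eqref{t-ge}; critical points of $E_t$ are classical by standard elliptic regularity and hence yield {\mi}s with data $(\sigma,t\alpha)$.

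For (i), I integrate \eqref{t-ge} over $(S,g_\sigma)$. Since $\int_S\Delta u\,dA_{g_\sigma}=0$ and Gauss-Bonnet gives $\int_S dA_{g_\sigma}=4\pi(g-1)$, any solution $u$ satisfies
\[
4\pi(g-1) \;=\; \int_S e^{2u}\,dA_{g_\sigma} + t^2\int_S \frac{|\alpha|^2}{g_\sigma^2}\,e^{-2u}\,dA_{g_\sigma} \;\geq\; 2t\int_S \frac{|\alpha|}{g_\sigma}\,dA_{g_\sigma}
\]
by pointwise AM-GM applied to the two exponential terms. Thus $t$ is a priori bounded by an explicit constant depending only on $(\sigma,\alpha)$, which forces nonexistence for large $t$ and proves (i).

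For (ii), the strategy is a {\mtps} argument anchored at Uhlenbeck's stable solution $u_1(t)$. First, the Hessian
\[
E_t''(u_1(t))(v,v) = \int_S \left(|\nabla v|^2 + 2 e^{2 u_1(t)} v^2 - 2t^2 \frac{|\alpha|^2}{g_\sigma^2} e^{-2u_1(t)} v^2\right) dA_{g_\sigma}
\]
converges as $t\to 0$ (recall $u_1(t)\to 0$ smoothly by Uhlenbeck) to $\int_S (|\nabla v|^2 + 2 v^2)\,dA_{g_\sigma}$, so it is uniformly coercive on $H^1(S)$ for sufficiently small $t$; thus $u_1(t)$ is a strict local minimum of $E_t$. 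Next, evaluating on the constant configuration $u\equiv -K$ and letting $K\to+\infty$, the term $-\tfrac{1}{2}t^2 e^{2K}\int_S (|\alpha|^2/g_\sigma^2)\,dA_{g_\sigma}$ dominates (using $\alpha\not\equiv 0$), so $E_t(-K)\to -\infty$ and there exists $v$ with $E_t(v)<E_t(u_1(t))$. The standard {\mtps} setup then delivers the candidate critical value $c_t = \inf_\gamma \max_{s\in[0,1]}E_t(\gamma(s))>E_t(u_1(t))$, with $\gamma$ ranging over continuous paths in $H^1(S)$ joining $u_1(t)$ to $v$.

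The main obstacle is verifying the Palais-Smale condition for $E_t$ at level $c_t$: given $(u_n)\subset H^1(S)$ with $E_t(u_n)\to c_t$ and $E_t'(u_n)\to 0$ in $H^{-1}(S)$, one must rule out the escape of $u_n$ to $-\infty$ on the support of $\alpha$. Testing $E_t'(u_n)$ against the constant function $1$ and against $u_n$, and combining with the bound on $E_t(u_n)$, should pin down $\int_S e^{2u_n}\,dA$, $\int_S (|\alpha|^2/g_\sigma^2)\,e^{-2u_n}\,dA$, and the Dirichlet energy $\|\nabla u_n\|_{L^2}^2$; a Moser-Trudinger-type inequality then upgrades this to an $L^\infty$ bound on $u_n$, and elliptic regularity extracts a subsequence converging to a critical point $u_2(t)\neq u_1(t)$ at level $c_t>E_t(u_1(t))$. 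If direct PS fails, an alternative route is a Leray-Schauder degree computation: the a priori bound from (i) confines all solutions to a finite ball, the degree in a sufficiently large ball is zero, while the local degree at the stable $u_1(t)$ equals $+1$, again forcing a second solution.
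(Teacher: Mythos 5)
Your overall architecture matches the paper's: part (i) by an integral identity, part (ii) by a mountain pass anchored at Uhlenbeck's stable solution, with the valley reached along very negative constants. For part (i) your argument is correct and in fact a genuinely different, cleaner route: the paper first proves $u\le 0$ by the maximum principle (Theorem \ref{thm:Ciao}(a)) and then applies Cauchy--Schwarz to $\bigl(\int_S \frac{|\alpha|}{g_\sigma}\,dA_\sigma\bigr)^2$, obtaining nonexistence for $t\ge 2\pi(2g-2)\bigl(\int_S\frac{|\alpha|}{g_\sigma}\bigr)^{-1}$, whereas your pointwise AM--GM inequality $e^{2u}+t^2\frac{|\alpha|^2}{g_\sigma^2}e^{-2u}\ge 2t\frac{|\alpha|}{g_\sigma}$ needs no sign information on $u$ and yields the sharper threshold $t\ge \pi(2g-2)\bigl(\int_S\frac{|\alpha|}{g_\sigma}\bigr)^{-1}$ (strictness holds because $\alpha$ must vanish somewhere while $e^{2u}>0$, so equality in AM--GM is impossible). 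Also, a small caveat on your local-minimum step: your coercivity of the Hessian is proved only in the limit $t\to 0$, hence only for small $t$; the paper instead invokes $L(u(t),t)>0$ on all of $[0,\tau_0)$ from Theorem \ref{thm:Karen}(b). Since the statement only asserts the existence of \emph{some} $\tau_0>0$, your weaker version is admissible, but it gives less than the paper does.

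The genuine gap is in your Palais--Smale verification, which is the technical heart of part (ii) and which you leave at the level of ``should pin down''. Two concrete problems. First, the step ``a Moser--Trudinger-type inequality then upgrades this to an $L^\infty$ bound on $u_n$'' is false: a PS sequence solves no equation, so no elliptic estimate applies to it, and an $H^1$ bound never yields $L^\infty$ in dimension two; moreover no $L^\infty$ bound is needed. The correct mechanism (the one in Theorem \ref{PS}) is weak convergence together with compactness of $u\mapsto e^{\pm u}$ from $H^1(S)$ into $L^2(S)$, with attention to the term $\int_S V\,(e^{-2u_n}-e^{-2\hat u})(u_n-\hat u)$, whose monotonicity has the \emph{wrong} sign and must be killed by the strong $L^2$ convergence of the exponentials rather than by convexity. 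Second, your degree-theoretic fallback does not work as stated: to conclude that the degree in a large ball is zero you would need a priori bounds on all solutions, uniform along a homotopy in $t$ reaching the nonexistence range; part (i) only bounds $t$, not $\|u\|$, the maximum principle gives only $u\le 0$ with no lower bound, and Theorem 1.3(ii) shows solutions can blow up, so such uniform bounds are exactly what is missing. The paper circumvents all of this by a device absent from your proposal: it truncates the nonlinearities for $s>1$ (the functions $F_1,F_2$ with $F_1(s)=-s^\theta$, $\theta>2$) so that the Ambrosetti--Rabinowitz-type inequality \eqref{eq:ARCondition} holds, introduces the equivalent weighted norm $\|\cdot\|_V$ with $V=t^2|\alpha|^2/g_\sigma^2$ (Lemma \ref{lem:Equivalent}), proves PS for the modified functional $\mathcal F$, and recovers the original Gauss equation via the maximum principle (Proposition \ref{NG}). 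For what it is worth, your two test functions can be pushed through for the raw functional: testing $E_t'(u_n)$ against $1$ gives $\int_S e^{2u_n}\,dA_\sigma+t^2\int_S\frac{|\alpha|^2}{g_\sigma^2}e^{-2u_n}\,dA_\sigma=|S|+o(1)$, Jensen's inequality then bounds the mean $\bar u_n$ from above, and the energy bound gives $\tfrac12\|\nabla u_n\|_{L^2}^2-\bar u_n|S|=O(1)$, which bounds $\|\nabla u_n\|_{L^2}$ and $\bar u_n$ from below; but this chain, and the compactness step that must follow it, is precisely what your sketch omits, so as written the multiplicity claim is not proved.
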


In Uhlenbeck's theorem, she also proved that there is a positive constant $\epsilon$, such that there is an
unstable solution on for each $t \in (\tau_0 - \epsilon,\tau_0)$. We point out that in this parameter interval 
$(\tau_0 - \epsilon,\tau_0)$, the {\mi} obtained from our additional solution might coincide with Uhlenbeck's 
unstable solution on the same interval.

The nature of these solutions indicates important geometric information on the {\ms}s, as well as the {\htm}s they
immerse into: At $t =0$, the surface $S$ is totally geodesic, and its {\nb} is a Fuchsian manifold, i.e., a warped
product {\htm}. For $t$ small enough along the Uhlenbeck {\scv}, and the {\pc}s of the {\mi} stay bounded in
magnitude less than $1$, its {\nb} is a so-called {\it {\af} manifold}, and $S$ is the unique {\ms} within this {\nb}
(\cite{Uhl83}). Then further along the solution curve, the {\mi}s remain stable until a particular parameter value,
but the {\nb} becomes finite. It will be very interesting to understand further solutions along this {\scv}, as well as
the geometry of {\htm}s when $t$ is approaching its maximal value when such a {\mi} is allowed.

In Theorem 1.2, we use Uhlenbeck's parameterization of the solution curve to study the equation \eqref{t-ge}, and
find an additional solution for each Uhlenbeck's stable solution along the {\scv}. We note that though these solutions
represent the same point in {\TS}, the {\htm}s they immersed into are quite different: by fixing the data $(\sigma, t\alpha)$,
we fix the conformal class of the surface and its {\sff} for the {\mi}. Different solutions represent different induced metrics
on the surface $(S, \sigma)$, hence two normal bundles are distinct. It is an intriguing question to ask how many {\mi}s
are allowed for each given data $(\sigma,t\alpha)$.

Naturally we also consider the asymptotic behavior of the solutions of the {\gse} \eqref{t-ge}. The blow-up analysis near
$t =0$ provides important structural information about the {\mi}s we obtained in Theorem 1.2:
\begin{theorem}
Let $\{u_n(t_n)\}$ be a sequence of solutions to the equation \eqref{t-ge} with $t_n \to 0$. Then, along a subsequence,
the following alternative holds
\begin{enumerate}
\item[(i)]
$u_n$ coincides with the solution obtained in Uhlenbeck's theorem, in which case, the normal bundle of $S$ is an {\af} {\tm},
and the {\pc}s are less than one in absolute value;
\item[(ii)]
 or
 $\|u_n\|_{\infty} \to \infty$, in which case, the absolute values of the intrinsic curvatures of the corresponding {\mi} go to
 infinity.
\end{enumerate}
\end{theorem}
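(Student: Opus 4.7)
The plan is to dichotomize according to whether the sequence $\{\|u_n\|_\infty\}$ is uniformly bounded, and then in the bounded case to identify the branch with Uhlenbeck's solution via the \ift, while in the unbounded case to exploit an explicit formula for the intrinsic curvature. To set up the dichotomy, I would first apply the \maxp to \eqref{t-ge}: at a maximum point $p$ of $u_n$, $\Delta u_n(p) \leq 0$ forces $e^{2u_n(p)} + t_n^2 |\alpha(p)|^2 g_\sigma^{-2}(p) e^{-2u_n(p)} \leq 1$, and in particular $u_n \leq 0$ on $S$. Hence $\|u_n\|_\infty = -\min_S u_n$, and passing to a subsequence we are in one of two alternatives: either $\{u_n\}$ is uniformly bounded, or $\min_S u_n \to -\infty$.

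In the bounded case, the right-hand side of $\Delta u_n = e^{2u_n} - 1 + t_n^2 |\alpha|^2 g_\sigma^{-2} e^{-2u_n}$ is uniformly bounded, so Schauder estimates yield a uniform $C^{2,\beta}$ bound and a subsequence converges in $C^2$ to some $u_\infty$ solving $\Delta u_\infty + 1 - e^{2u_\infty} = 0$. Applying the \maxp once more, at a maximum and at a minimum of $u_\infty$, forces $u_\infty \equiv 0$. To upgrade $C^2$-convergence to genuine equality with Uhlenbeck's solution, I would invoke the \ift at $(t,u)=(0,0)$: the linearization in $u$ of the left-hand side of \eqref{t-ge} is $\Delta - 2$, which is an isomorphism since it is strictly negative definite, so there is a unique $C^1$ curve of solutions through $(0,0)$. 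Uhlenbeck's branch lies on this curve, and for $n$ large so does $(t_n, u_n)$, proving equality. The {\pc}s $\pm t_n |\alpha|/(e^{2u_n} g_\sigma)$ then tend to zero uniformly, so they are less than one in absolute value and the {\nb} is an {\af} {\tm}.

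In the unbounded case, let $p_n \in S$ realize $u_n(p_n) = \min_S u_n \to -\infty$. From $\Delta u_n(p_n) \geq 0$ and \eqref{t-ge},
\[
t_n^2 |\alpha(p_n)|^2 g_\sigma^{-2}(p_n)\, e^{-2u_n(p_n)} \geq 1 - e^{2u_n(p_n)},
\]
and multiplying by $e^{-2u_n(p_n)}$ shows $t_n^2 |\alpha(p_n)|^2 g_\sigma^{-2}(p_n)\, e^{-4u_n(p_n)} \to +\infty$. On the other hand, the conformal change formula for Gaussian curvature combined with \eqref{t-ge} (equivalently, $K_{\mathrm{int}} = -1 - \lambda^2$ for a {\ms} in a hyperbolic {\tm} with $\lambda^2 = t_n^2 |\alpha|^2/f^2$) gives the explicit form
\[
K_n = -1 - \frac{t_n^2 |\alpha|^2}{g_\sigma^2}\, e^{-4u_n}
\]
for the intrinsic curvature of the induced metric $e^{2u_n} g_\sigma |dz|^2$. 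Evaluating at $p_n$ yields $|K_n(p_n)| \to \infty$, proving (ii).

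The main obstacle is the uniqueness step in the bounded case: mere $C^2$-convergence to zero does not a priori imply equality with Uhlenbeck's solution, so one genuinely needs the invertibility of the linearization $\Delta - 2$ at $(0,0)$ together with the local-uniqueness content of the \ift. A secondary delicate point in the unbounded case is that $\alpha$ may vanish on $S$, but the inequality derived from the equation at the minimum forces the precise quantity $|\alpha(p_n)|^2 e^{-4u_n(p_n)}$ to blow up at the chosen sequence, obviating any separate analysis near the zero locus of $\alpha$.
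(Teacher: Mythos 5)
Your argument is correct, and its two decisive steps coincide with the paper's: in the bounded regime, identification with Uhlenbeck's branch via the local uniqueness supplied by the implicit function theorem at $(0,0)$ (the paper's proof of Theorem 1.3 invokes exactly this, via the unique solution branch near $(0,0)$), and in the unbounded regime, the minimum principle at a point $p_n$ realizing $\min_S u_n \to -\infty$, which in the paper appears as the inequality $1-(1+\lambda(t_n)^2)e^{2u_{t_n}(x_n)} \leq 0$ and in your version is the same inequality multiplied out to give $\lambda_n^2(p_n) = t_n^2|\alpha(p_n)|^2 g_\sigma^{-2}(p_n)e^{-4u_n(p_n)} \to \infty$, hence $|K_n(p_n)| = 1+\lambda_n^2(p_n) \to \infty$. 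Where you genuinely diverge is in the dichotomy and the compactness machinery. The paper (Proposition~\ref{alt}) splits on $\|u_n\|_{H^1}$ and treats the bounded case variationally: weak $H^1$ convergence, Moser--Trudinger compactness of $u \mapsto e^{\pm u}$, and passage to the limit in the weak formulation to see the limit solves the $t=0$ equation and is therefore $0$; it then needs the separate integral identity \eqref{eq:Lemonade} in its Case 2 to show that $H^1$ blow-up forces $L^\infty$ blow-up. You instead split directly on $\|u_n\|_\infty$ (legitimate, since the maximum principle gives $u_n \leq 0$, so $\|u_n\|_\infty = -\min_S u_n$) and replace the weak-convergence step with a Schauder bootstrap yielding uniform $C^{2,\beta}$ bounds and $C^2$ convergence to $0$. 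This buys a real gain in rigor: invoking the IFT's local uniqueness in the space $W^{2,2}(S)$, where $F$ of \eqref{FF} is defined, requires $u_n \to 0$ in $W^{2,2}$, and the paper's Case 1 as written only establishes weak $H^1$ convergence (its stated conclusion $\|u_n\|_{H^1} \to 0$ would itself need such a bootstrap), so you have made explicit a strengthening the paper leaves implicit; conversely, your $L^\infty$ dichotomy makes the paper's Case 2 argument superfluous. Your closing observation that the minimum-point inequality automatically keeps $p_n$ off the zero locus of $\alpha$ is also correct: if $\alpha(p_n)=0$, the inequality would read $0 \geq 1 - e^{2u_n(p_n)} > 0$ for large $n$, a contradiction, so no separate analysis near the zeros of $\alpha$ is needed.
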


Our technique is to study the variational theory for the solutions to the {\gse}. In calculus of variations, the problem of
obtaining additional solutions to some differential equation is well-studied (\cite{GT83}, \cite{Str00}): one rewrites the
equation such that the nonlinear operator is the derivative of an appropriate functional, and uses techniques such as the
mountain pass theorem from nonlinear functional analysis to find other critical points of the associated functional. Much
of the difficulty is that the usual variational setting for the problem does not satisfy the compactness property. We
introduce a different but equivalent inner product structure to the usual Sobolev space for the problem and prove the
mountain pass theorem in the new setting.
\subsection*{Plan of the paper}
We will collect preliminary results in section two. In particular, we briefly introduce hyperbolic geometry of dimensions
two and three, and set up the {\gse} in this setting. Section three is devoted to prove our main results, and it breaks into
several subsections: in $\S 3.1$, we prove a nonexistence theorem for large parameter $t$; in \S 3.2, we study
Uhlenbeck's solution curve and its parameterization; in $\S 3.3$, we work in the variational setting of the problem, and
define a new norm and show that the functional with the norm satisfies the Palais-Smale compactness condition in
$\S 3.4$, therefore develop the mountain pass structure of the solutions; and in the section $\S 3.5$, we prove the
Theorem 1.3.
\subsection*{Acknowledgements}
The research of Z.H. is partially supported by a PSC-CUNY award, while the research of M.L. is supported by projects
MTM2008-06349-C03-01 (Spain) and SGR2009-345 (Catalunya).

\section{Preliminaries}
\subsection{Hyperbolic surfaces and three-manifolds}
By hyperbolic spaces, we refer to Riemannian manifolds of constant sectional curvature $-1$. Naturally, a hyperbolic
manifold $M^n$ of dimension $n$ is a quotient space of $\H^n$ ($n \ge 2$), by a subgroup of the (orientation
preserving) isometry group $\Iso(\H^n)$. We only consider $n =2$ and $n=3$ in this paper. These two cases are
drastically different, largely due to Mostow's rigidity theorem.

In the case of $n =2$, we have $\Iso(\H^2) = \PSL(2,\R)$, which can be identified to a subgroup of $\Iso(\H^3) = \PSL(2,\C)$. Let
$S$ be a closed surface of genus $g \geq 2$, then there is a {\hym} in each {\cs} of $S$, by the uniformization theorem.
Let $\sigma$ be a {\cs} on $S$, it is a point in {\TS} $T_g(S)$. We often use $z$ and $g_{\sigma}dzd\bar{z}$ to
record the conformal coordinate and the {\hym} on $(S,\sigma)$. Similarly, we use $w$ and $g_{\rho}dwd\bar{w}$
to record the conformal coordinate and the {\hym} on another {\cs} $\rho \in T_g(S)$. The geometry of {\TS} is often
studied via its cotangent bundle. At $\sigma \in T_g(S)$, we have the cotangent space $Q(\sigma)$, where
$\alpha \in Q(\sigma)$ is a {\hqd} on $(S,\sigma)$. Locally, $\alpha = \alpha(z)dz^2$, where $\alpha(z)$ is
holomorphic.

Let $M^3 = \H^3/\Gamma$ be a hyperbolic three-manifold, and we assume $\Gamma \subset \PSL(2, \C)$ acts on
$\H^3$ properly and discontinuously. In this case we call $\Gamma$ a {\it {\kg}}. For any $p \in \H^3$, the orbit set of
$\Gamma$ has accumulation points on the boundary $S_{\infty}^2$. The closed set of these limit points is called the
{\it limit set} $\Lambda_{\Gamma}$ of the group $\Gamma$. There are two elementary types of {\htm}s we will
encounter frequently: when $\Lambda_{\Gamma}$ is a round circle, $M^3$ is called {\it Fuchsian}, which is a
product space of a hyperbolic surface $S$ and the real line $\R$. It is easy to see that the space of Fuchsian
manifolds is isometric to {\TS}; when $\Lambda_{\Gamma}$ lies in a Jordan curve, $M^3$ is called {\it {\qf}}, and it is
topologically $S\times \R$. In this case $M^3$ is a complete {\htm} quasi-isometric to a Fuchsian manifold. These
two types correspond to the beginning of the solution curve for the {\gse} \eqref{t-ge}.

In the case of $M^3$ being {\qf}, it admits at least one immersed area-minimizing {\is} (\cite{SY79, SU82}).
Furthermore, if the {\pc}s are less than one in magnitude, i.e., the case of {\af}, the incompressible {\ms} is unique.
Hence one can use {\ms}s to parametrize the space of {\af} manifolds within the {\qf} space (\cite{Uhl83, Tau04}),
and obtain important geometric and dynamical information about the {\af} manifolds, in terms of the geometry of
the {\ms}: for example, hyperbolic volume of the {\cvc} and the Hausdorff dimension of the limit set (\cite{HW09b}),
and Teichm\"uller distance between conformal infinities (\cite{GHW10}). There are also recent important work on
the applications of {\af} manifolds to mathematical physics (see for instance \cite{KS07,KS08}).
\subsection{The normal bundle}
Let $S \subset M^3$ be a {\mi} of $S$ into a {\htm} $M^3$, and $T^{\perp}S$ be its {\nb} in $M^3$. The Riemann
curvature tensor $R_{ijk\ell}$ on $M^3$ has six components, three of them satisfy curvature equation of the form:
\begin{equation} \label{ce}
R_{i3j3} = -g_{ij}, 
\quad 
i=(1,2), j=(1,2).
\end{equation}
From classical Riemannian geometry, the exponential map $\exp: T^{\perp}S \to M^3$ is a local diffeomorphism on
$S\times (-\epsilon, \epsilon) \subset T^{\perp}S$. Therefore, given first and {\sff}s on $S$, these equations \eqref{ce}
uniquely determine a {\hym} on $S\times (-\epsilon, \epsilon) \subset T^{\perp}S$. On the {\nb} $T^{\perp}S$, these
three equations can be reduced to a second order system for fixed $z \in S$:
\begin{equation*}
\frac12 \frac{\partial^2 g_{ij}(z,r)}{\partial r^2} -
\frac14 \frac{\partial g_{i\ell}(z,r)}{\partial r}g^{\ell k}\frac{\partial g_{kj}(z,r)}{\partial r} = g_{ij}(z,r),
\end{equation*}
whose solution can be written explicitly as follows (\cite{Uhl83}): for $(z,r) \in S \times (-\epsilon, \epsilon)$,
\begin{equation}\label{metric-r}
   g(z,r)=e^{2v(z)}[\cosh(r)\I+\sinh(r)
   e^{-2v(z)}A(z)]^{2}\ ,
\end{equation}
and the {\hym} on $T^{\perp}S$ is given as $ds^2 = g(z,r) |dz|^2 +dr^2$. Here the induced metric on $S$ is given by
$g_{ij}(z)=g_{ij}(z,0) =e^{2v(z)}\delta_{ij}$, where $v(z)$ is a smooth
function on $S$, and the {\sff} $A(z)=[h_{ij}]_{2\times{}2}$, where $z = x + \sqrt{-1}y$ is the conformal coordinates
on marked surface $(S, \sigma)$. With respect to these coordinates, the {\sff} of $S \subset M^3$ can be written as
\begin{equation} \label{h}
   h = h_{11}dx^2 + 2h_{12}dxdy+ h_{22}dy^2.
\end{equation}
The remaining three curvature equations are constraint equations for the first and {\sff} on $S$. Note that the {\ms}
$S$ has zero {\mc}, so we denote $\pm\lambda(z)$ the eigenvalues of $A(z)$, where $\lambda(z) \ge 0$.
They are the {\pc}s of $S$. In this case, $h_{11} = -h_{22}$ and two of the remaining three curvature equations are
the Codazzi equations: $R_{ijk3} = 0$. That is equivalent to say \eqref{h} becomes (\cite{LJ70}):
\begin{equation}\label{ha}
   h = Re(\alpha),
\end{equation}
for some $\alpha \in Q(\sigma)$, a {\hqd} on the marked {\RS} $(S,\sigma)$. Note that the {\hqd} $\alpha$ must
have zeros somewhere, or $|\alpha|dzd\bar{z}$ defines a smooth flat metric on $S$, violating the Gauss-Bonnet theorem.

The metric $g(z,r)$ might be singular if there are conjugate points of the exponential map. It is easy to verify that when
$\lambda(z) < 1$ for all $z \in S$, then the map $\exp$ has no conjugate point and the {\nb} $T^{\perp}S$  extends to
both infinities to become a complete {\htm} and $S$ is the only {\ms} (also embedded) in $T^{\perp}S$.
\subsection{The Gauss equation}
Five of six curvature equations for the {\mi} $S \subset M^3$ take the form of \eqref{ce} and \eqref{ha}. They determine
the metric in the {\nb} of $S$ and the {\sff} of $S$. The sixth equation is the {\gse} which describes the interaction between
the hyperbolic structure on $S$ and the ambient hyperbolic structure of $M^3$. Note that ours is slightly different from the
equation in [Theorem 4.2 \cite{Uhl83}] because of an obvious typo there. We recall from \eqref{ge}.
\begin{equation*}
\Delta u + 1 -e^{2u} - \frac{|\alpha|^2}{g_\sigma^2}e^{-2u} = 0,
\end{equation*}
here $g_\sigma |dz|^2$ is the {\hym} on $(S,\sigma)$, and the conformal factor between the induced metric $f(z)|dz|^2$
and $g_{\sigma}|dz|^2$ is determined by $f(z) = e^{2u(z)}g_\sigma(z)$. We also use $\Delta$ to denote the hyperbolic
Laplace operator on $(S,\sigma)$. This equation is similar to the prescribed scalar curvature equation studied by
Kazdan-Warner in \cite{KW74, KW75}.

The {\gse} is a consequence of two equivalent ways of describing the intrinsic curvature $K(z)$ induced by the {\mi}:
\begin{equation*}
K(z) = e^{-2u(z)}(-\Delta u(z) -1) = -1 -\lambda^2(z),
\end{equation*}
where the positive {\pc} $\lambda(z)$ is given by
\begin{equation}\label{pc}
  \lambda(z) = \frac{|\alpha|}{g_\sigma}e^{-2u},
\end{equation}
because of the equation \eqref{ha}.

We are particularly interested in a family of {\gse}s, corresponding to a ray $\alpha(t) = t\alpha \in Q(\sigma)$, as in
\eqref{t-ge}:
\begin{equation*}
\Delta u(t) + 1 -e^{2u(t)} - \frac{t^2|\alpha|^2}{g_\sigma^2}e^{-2u(t)} = 0.
\end{equation*}

\section{Proof of main theorems}
Our main theorems concern the solution curve for the family of {\gse}s \eqref{t-ge} and the geometry of the {\ms}s
corresponding to these solutions. For the parameter $t \ge 0$, we study the large values first, where we prove
solutions do not exist in \S 3.1. In the remaining sections, we focus on the range where solutions do exist, especially
Uhlenbeck's stable solutions, in \S 3.2. We then construct the mountain pass solutions and study their asymptotic
geometry in the remainder of the section.
\subsection{Non-existence result}

Let us first emphasize that equation~\eqref{t-ge} does not admit any solution for large value of $t$. More specifically,
the following theorem reveals some necessary properties for solutions to the {\gse}. In particular, it proves part (i) of
the Theorem 1.2.
\begin{theorem} \label{thm:Ciao}
\begin{enumerate}
\item[(a)]
Any solution $u$ to the {\gse} satisfies $u \leq 0$.
\item[(b)]
  For $t \geq  2\pi(2g-2) \left( \int_{S_{\sigma}}
                            \frac{|\alpha|}{|g_{\sigma}|} \right)^{-1}$, Problem~\eqref{t-ge} admits no solution.
\end{enumerate}
\end{theorem}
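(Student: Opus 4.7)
For part (a), my plan is a direct application of the maximum principle to the semilinear equation on the closed surface $S$. Let $z_0 \in S$ be a global maximum of $u$. Since $\Delta u(z_0) \leq 0$, substituting into the Gauss equation \eqref{t-ge} gives
\[
e^{2u(z_0)} + \frac{t^{2}|\alpha(z_0)|^{2}}{g_\sigma(z_0)^{2}}\, e^{-2u(z_0)} \;\leq\; 1.
\]
Since the second term is non-negative, this forces $e^{2u(z_0)} \leq 1$, i.e. $u(z_0) \leq 0$. Because $z_0$ is the global maximum, $u \leq 0$ on all of $S$. (Note the argument does not require $\alpha$ to be nonvanishing at the maximum.)

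For part (b), the plan is to combine an AM-GM pointwise lower bound with integration against the hyperbolic area form. Rewrite \eqref{t-ge} as
\[
\Delta u + 1 \;=\; e^{2u} + \frac{t^{2}|\alpha|^{2}}{g_\sigma^{2}}\, e^{-2u},
\]
and apply the elementary inequality $a+b \geq 2\sqrt{ab}$ to the right-hand side, obtaining
\[
\Delta u + 1 \;\geq\; \frac{2t\,|\alpha|}{g_\sigma}
\]
pointwise on $S$. Integrating against $dA_\sigma = g_\sigma\, dxdy$ makes the divergence term $\int_S \Delta u\, dA_\sigma$ vanish (closed surface), while $\int_S 1\, dA_\sigma$ is the total hyperbolic area of $(S,\sigma)$, equal to $2\pi(2g-2)$ by Gauss-Bonnet. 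Hence
\[
2\pi(2g-2) \;\geq\; 2t \int_S \frac{|\alpha|}{g_\sigma}\, dA_\sigma,
\]
which gives an upper bound on $t$ among admissible parameters; contrapositively, whenever $t$ exceeds this threshold (in particular the one stated in the theorem), no solution exists. Part (a) is not strictly needed for this step, but it complements the bound by pinning down the sign of $u$.

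The main obstacle is purely notational: one must be careful that $\Delta$ is the Laplace–Beltrami operator associated to $g_\sigma|dz|^2$ (so that its integral against $dA_\sigma$ vanishes on the closed surface), and that the quantity $\int_{S_\sigma} |\alpha|/|g_\sigma|$ is interpreted with respect to the hyperbolic area form so that it simplifies to $\int_S |\alpha|\, dxdy$. Once these conventions are fixed, both parts are two-line consequences of the maximum principle and AM-GM, respectively, so no genuine analytic difficulty arises.
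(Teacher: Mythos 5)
Your part (a) is exactly the paper's argument (maximum principle at a global maximum point, forcing $e^{2u(z_0)} \leq 1$), so there is nothing to compare there. Your part (b), however, takes a genuinely different route. The paper first integrates \eqref{t-ge} over $(S,\sigma)$ to get $2\pi(2g-2) > t^2 \int_S \frac{|\alpha|^2}{g_\sigma^2}e^{-2u}\,dA_\sigma$, and then applies Cauchy--Schwarz to $\int_S \frac{|\alpha|}{g_\sigma}\,dA_\sigma = \int_S e^{u}\cdot \frac{|\alpha|}{g_\sigma}e^{-u}\,dA_\sigma$, using part (a) in the form $\int_S e^{2u}\,dA_\sigma \leq 2\pi(2g-2)$; combining the two inequalities yields $t < 2\pi(2g-2)\bigl(\int_S \frac{|\alpha|}{g_\sigma}\,dA_\sigma\bigr)^{-1}$. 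Your pointwise AM--GM bound $e^{2u} + \frac{t^2|\alpha|^2}{g_\sigma^2}e^{-2u} \geq \frac{2t|\alpha|}{g_\sigma}$, integrated over the closed surface, instead gives $t \leq \pi(2g-2)\bigl(\int_S \frac{|\alpha|}{g_\sigma}\,dA_\sigma\bigr)^{-1}$ for any solution --- a nonexistence threshold \emph{sharper by a factor of $2$} than the one stated, so the theorem follows a fortiori; moreover your argument makes no use of part (a), as you correctly observe. (If one wants strict inequality even at your threshold, note that equality in AM--GM would require $e^{4u} = t^2|\alpha|^2/g_\sigma^2$ everywhere, which is impossible for $t>0$ since the holomorphic quadratic differential $\alpha$ must vanish somewhere, as the paper itself points out in \S 2.2.) What the paper's Cauchy--Schwarz route buys instead is geometric meaning: as the authors remark after the proof, their inequality is precisely a comparison of the Teichm\"{u}ller norm and the Weil--Petersson norm of $\alpha$; your route trades that interpretation for a more elementary, self-contained argument with a better constant. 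Both proofs are correct.
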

\begin{proof}
{\bf (a)} This is the consequence of the {\maxp}: At a maximum point $x_0$ of a solution $u$, apply the {\maxp} to the
equation~\eqref{ge} to obtain that  $0 \leq  1 - e^{2u (x_0)}$. Hence $ u \leq 0$, the conclusion follows.

\medskip
{\bf (b)} Let $(t,u)$ be a solution to ~\eqref{t-ge}, and $dA_{\sigma} = g_{\sigma}dzd\bar{z}$ be the hyperbolic area
element on $(S, g_{\sigma})$. On the one hand, by integrating equation~\eqref{t-ge} on $(S, g_{\sigma})$, 
and using that the area of $S$ is $2\pi(2g-2)$, 
we obtain
\begin{eqnarray}
 2\pi(2g-2) &=& \int_{S} e^{2 u }dA_{\sigma} + t^2 \int_{S}\frac{|\alpha|^2}{g_{\sigma}^2} e^{- 2 u}dA_{\sigma} \nonumber \\
               &>& t^2 \int_{S}\frac{|\alpha|^2}{g_{\sigma}^2} e^{- 2 u}dA_{\sigma}.  \label{eq:Giessen}
\end{eqnarray}
On the other hand, Cauchy-Schwarz inequality and the fact that $u \leq 0$ give
\begin{eqnarray}
   \left( \int_{S}{\frac{|\alpha|}{g_{\sigma}}}dA_{\sigma} \right)^2
   &=&
   \left( \int_{S} e^{u}\frac{|\alpha|}{g_{\sigma}} e^{-u}dA_{\sigma} \right)^2   \nonumber \\
   &\leq&
   \left( \int_{S} e^{2 u}dA_{\sigma}  \right) \left( \int_{S} \frac{|\alpha|^2}{g_{\sigma}^2} e^{-2 u}dA_{\sigma} \right)
   \nonumber \\
   &\leq&
   2\pi(2g-2) \left( \int_{S} \frac{|\alpha|^2}{g_{\sigma}^2} e^{-2 u}dA_{\sigma} \right) .
   \label{eq:Giessen2}
\end{eqnarray}
Relations \eqref{eq:Giessen} and \eqref{eq:Giessen2} imply, for any solution $(t,u)$, the following inequality holds
\begin{equation*}
 2\pi(2g-2) > t^2 \frac{\left( \int_{S_{\sigma}}{\frac{|\alpha|}{g_{\sigma}}} \right)^2}{2\pi(2g-2)},
\end{equation*}
and the conclusion follows.
\end{proof}

{\bf Remark}: {\it One can see above application of the Cauchy-Schwarz inequality as a comparison of two metrics
on {\TS}: the Teichm\"{u}ller metric and the {\WPm} (See the Proposition 2.4 of \cite{Mc00}). For the {\hqd}
$\alpha dz^2$, its Teichm\"{u}ller norm is $\|\alpha\|_{T} = \int_{S}|\alpha|dzd\bar{z}$, while its Weil-Petersson norm
is given by $\|\alpha\|_{WP} = \sqrt{\int_{S}\frac{|\alpha|^2}{g_{\sigma}^2}dA_{\sigma}}$}.

\medskip

We want to point out that if two solutions to \eqref{t-ge} do exist, then their geometric properties are different, i.e.,

\begin{theorem}
Assume the Gauss equation~\eqref{t-ge} admits two solutions $u_1 \not \equiv u_2$, and consider
their associated principal curvature $\lambda_i(x)= \frac{|\alpha|}{g_{\sigma}}  e^{-2 u_i}$. Then
$\lambda_1 \not \equiv \lambda_2$.
\end{theorem}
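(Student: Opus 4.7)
The plan is to argue by contraposition: assume $\lambda_1 \equiv \lambda_2$ on $S$ and deduce $u_1 \equiv u_2$, which contradicts the hypothesis.

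The first observation is essentially algebraic. Since $\lambda_i = \frac{|\alpha|}{g_\sigma} e^{-2 u_i}$, the assumption $\lambda_1 \equiv \lambda_2$ rewrites as
\begin{equation*}
\frac{|\alpha(z)|}{g_\sigma(z)} \bigl( e^{-2 u_1(z)} - e^{-2 u_2(z)} \bigr) = 0 \quad \text{for every } z \in S.
\end{equation*}
On the open set $\Omega := \{ z \in S : \alpha(z) \neq 0 \}$ one has $|\alpha|/g_\sigma > 0$, so this forces $e^{-2 u_1} = e^{-2 u_2}$ and hence $u_1 = u_2$ on $\Omega$.

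The next step is to argue that $\Omega$ is dense in $S$. Since $\alpha$ is a holomorphic quadratic differential on the Riemann surface $(S,\sigma)$, it is either identically zero or its zero set is discrete. The case $\alpha \equiv 0$ must be ruled out by a separate maximum principle argument: equation \eqref{t-ge} then reduces to $\Delta u + 1 - e^{2 u} = 0$, and evaluating at a maximum point gives $u \leq 0$ while evaluating at a minimum point gives $u \geq 0$, so $u \equiv 0$ is the only solution; this would contradict the existence of two distinct solutions $u_1 \not\equiv u_2$. Therefore $\alpha \not\equiv 0$, its zero set is discrete (and in fact finite, since $S$ is compact), so $\Omega$ is open and dense.

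The conclusion is now immediate from continuity: $u_1$ and $u_2$ are continuous functions on $S$ which agree on the dense set $\Omega$, hence agree everywhere, giving $u_1 \equiv u_2$. This contradicts the hypothesis $u_1 \not\equiv u_2$, so $\lambda_1 \not\equiv \lambda_2$. The proof is essentially a one-line density argument once one notes that $\alpha$ cannot vanish identically; the only piece requiring any real care is confirming the $\alpha \equiv 0$ case is inconsistent with the existence of two distinct solutions, and this is settled by the maximum principle already invoked in Theorem~\ref{thm:Ciao}(a).
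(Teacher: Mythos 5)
Your proof is correct, but it takes a genuinely different route from the paper's. The paper never invokes the holomorphicity of $\alpha$: it observes that $\lambda_1 \equiv \lambda_2$ means both $u_i$ solve the same prescribed-curvature equation $-\Delta u_i - 1 = K(x)\, e^{2u_i}$ with $K = -(1+\lambda_1^2) \leq -1$, subtracts the two equations, multiplies by $u_2 - u_1$ and integrates over $S$; since $(e^s - e^t)(s-t) \geq 0$ and $K < 0$, this forces $\int_S |\nabla (u_1 - u_2)|^2 \leq 0$, hence $u_2 - u_1$ is constant, and because $K$ never vanishes the pointwise identity $K(x)\bigl(e^{2u_2} - e^{2u_1}\bigr) = -\Delta(u_2 - u_1) = 0$ pins the constant to zero. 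You instead exploit the rigidity of holomorphic quadratic differentials: off the discrete (hence finite, by compactness of $S$) zero set of $\alpha$, the identity $\lambda_1 \equiv \lambda_2$ forces $u_1 = u_2$ by pure algebra, and density plus continuity finishes the argument — continuity is legitimate since, as the paper notes after Theorem~\ref{thm:Karen}, solutions are smooth by elliptic regularity. Your handling of the degenerate case $\alpha \equiv 0$ via the two-sided maximum principle (yielding $u \equiv 0$ as the unique solution, incompatible with $u_1 \not\equiv u_2$) is also correct and is a case the paper's write-up does not isolate, though its argument covers it implicitly. Each approach buys something: yours is more elementary and in fact proves a stronger pointwise statement — equality of principal curvatures forces equality of the conformal factors wherever $\alpha \neq 0$, with the PDE used only to exclude $\alpha \equiv 0$; the paper's integral argument is more robust — it would survive replacing $|\alpha|/g_\sigma$ by an arbitrary nonnegative weight, even one vanishing on an open set, because it is at bottom a uniqueness theorem for conformal metrics with the same negative curvature, which is why the paper remarks that Theorem 3.2 also follows from the comparison theorem of \cite{Wol82}.
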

\begin{proof}
Assume $\lambda_1 \equiv \lambda_2$. Setting the intrinsic curvature $K (x) := -(1+ \lambda_1^2(x))$,
we see that the functions $u_i$ solves the equation
\begin{equation*}
-\Delta u_i - 1 = K(x) e^{2 u_i}  (i=1,2).
\end{equation*}
Hence we have
\begin{equation} \label{eq:Certo}
   -\Delta (u_2 - u_1) = K (x) \big( e^{2 u_2} - e^{ 2 u_1} \big),
\end{equation}
and therefore
\begin{equation} \label{eq:Ciao}
  \int_S |\nabla (u_1 - u_2)|^2 =  \int_{S} K(x) \big( e^{2 u_2} - e^{ 2 u_1} \big) (u_2 - u_1)   \,.
\end{equation}
Since $(e^s - e^t) (s-t) \geq  0$ for any $s,t \in \mathbb R$, while $K (x) \leq -1 < 0$,
equality~\eqref{eq:Ciao} implies that $u_2 - u_1 \equiv C$ for some constant $C \in \mathbb R$.
Clearly $K \not \equiv 0$, and therefore equality~\eqref{eq:Certo} implies $u_2 \equiv u_1$.
\end{proof}
Since the solutions in above theorem are in the same {\cs}, the Theorem 3.2 can also be seen as a
consequence of a comparison theorem for conformal metrics of negative curvature (\cite{Wol82}).

\subsection{Uhlenbeck's solution curve}
A solution curve to the {\gse} can be obtained from the implicit function theorem, as in \cite{Uhl83}. In this subsection,
we study this solution curve further, in anticipation of using it to construct our mountain pass solution.

Consider the nonlinear map $F: W^{2,2} (S) \times [0,\infty) \to L^2 (S)$ defined by
\begin{equation}\label{FF}
  F(u,t) = \Delta u + 1 -e^{2u } - \frac{t^2|\alpha|^2}{g_\sigma^2}e^{-2u },
\end{equation}
where $W^{2,k} (S)$ stands for the classical Sobolev space. At each $t \geq 0$ fixed, the {\lo} 
$L(u,t) : W^{2,2} (S) \to L^2 (S)$ associated to $F$ is given by  
\begin{equation}\label{L}
  L(u,t) =- \Delta  + 2 \left(e^{2u} - \frac{t^2|\alpha|^2}{g_\sigma^2}e^{-2u} \right),
\end{equation}
and the differential (Fr\'echet derivative) of $F$ is given by
\begin{equation}\label{dF}
  dF(u,t) (\dot{u},\dot{t})= -L\dot{u} - 2t\dot{t}\frac{|\alpha|^2}{g_\sigma^2}e^{-2u},
\end{equation}
where $(\dot{u}, \dot{t}) \in W^{2,2} (S) \times \mathbb R$.

The linear operator $L$ is geometrically meaningful, since its eigenvalues are closely related to the 
stability of the {\mi} by the following theorem of Uhlenbeck:
\begin{theorem}\cite{Uhl83}
A {\mi} with data $(\sigma,t\alpha)$ in any hyperbolic {\tm} $M^3$ is stable if and only if $L \ge 0$.
\end{theorem}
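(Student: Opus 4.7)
\smallskip

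\noindent\textbf{Proof plan for Theorem 3.3.} The plan is to compute the second variation of area under normal variations of the minimal immersion and to identify, up to a positive conformal factor, the resulting Jacobi operator with the operator $L$ defined in \eqref{L}. Since $S$ is two-sided in $M^3$ (it is oriented and $M^3$ is orientable), normal variations may be parametrized by scalar functions $\phi\mathbf{n}$ with $\phi\in C^\infty(S)$, and stability is by definition the non-negativity of the index form on such variations.

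\smallskip

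\noindent\textbf{Step 1: The Jacobi operator.} For a two-sided minimal immersion in a Riemannian $3$-manifold, the standard second variation formula yields
\begin{equation*}
  \frac{d^2}{ds^2}\mathrm{Area}(S_s)\Big|_{s=0}
  \;=\;\int_S \phi\, J\phi\; dA_f,
  \qquad J\phi := -\Delta_f \phi-\bigl(|A|^2+\Ricbar(\mathbf{n},\mathbf{n})\bigr)\phi,
\end{equation*}
where $\Delta_f$ is the Laplace--Beltrami operator of the induced metric $f|dz|^2=e^{2u}g_\sigma |dz|^2$ and $dA_f$ is its area element. Hence the immersion is stable iff $\int_S \phi J\phi\,dA_f\geq 0$ for every $\phi\in C^\infty(S)$, i.e.\ iff $J\geq 0$ as a self-adjoint operator on $L^2(S,dA_f)$.

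\smallskip

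\noindent\textbf{Step 2: Geometric evaluation.} Since $M^3$ has constant sectional curvature $-1$, we have $\Ricbar(\mathbf{n},\mathbf{n})=-2$. Because the immersion is minimal with principal curvatures $\pm\lambda(z)$, we have $|A|^2=2\lambda^2$, and by \eqref{pc} (applied to the rescaled quadratic differential $t\alpha$)
\begin{equation*}
   \lambda^2 \;=\;\frac{t^2|\alpha|^2}{g_\sigma^2}\,e^{-4u}.
\end{equation*}
Moreover the conformal scaling $f=e^{2u}g_\sigma$ gives $\Delta_f=e^{-2u}\Delta$ and $dA_f=e^{2u}dA_\sigma$. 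Substituting yields
\begin{equation*}
  J\phi \;=\; -e^{-2u}\Delta\phi \;+\;2\phi\;-\;2\,\frac{t^2|\alpha|^2}{g_\sigma^2}\,e^{-4u}\,\phi.
\end{equation*}

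\smallskip

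\noindent\textbf{Step 3: Identification with $L$.} Multiplying by $e^{2u}>0$ gives exactly
\begin{equation*}
  e^{2u} J\phi \;=\; -\Delta\phi \;+\;2\!\left(e^{2u}-\frac{t^2|\alpha|^2}{g_\sigma^2}e^{-2u}\right)\!\phi
  \;=\; L\phi,
\end{equation*}
which is the operator defined in \eqref{L}. Consequently, for every test function $\phi$,
\begin{equation*}
   \int_S \phi\,J\phi\; dA_f
   \;=\;\int_S \phi\,(e^{2u}J\phi)\; dA_\sigma
   \;=\;\int_S \phi\, L\phi\; dA_\sigma,
\end{equation*}
so the index form of the minimal immersion, expressed in $L^2(dA_f)$, coincides with the quadratic form of $L$ expressed in $L^2(dA_\sigma)$. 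Therefore $J\geq 0$ on $L^2(dA_f)$ if and only if $L\geq 0$ on $L^2(dA_\sigma)$, which is precisely the claim.

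\smallskip

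\noindent\textbf{Where care is needed.} The only delicate point is bookkeeping: the Laplacian $\Delta$ appearing in $L$ is that of the hyperbolic metric $g_\sigma |dz|^2$, not of the induced metric, and the inner products used to state "$J\geq 0$" and "$L\geq 0$" are taken with respect to different area forms. The conjugation $L=e^{2u}J$ together with $dA_f=e^{2u}dA_\sigma$ makes the quadratic forms agree exactly, so the two notions of non-negativity are identical. No compactness or elliptic regularity argument is required beyond what is needed to identify the closure of the index form on $C^\infty(S)$ with the self-adjoint extension of $L$ on $W^{2,2}(S)$, which is standard for a smooth Schr\"odinger-type operator on a closed surface.
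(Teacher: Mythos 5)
Your proposal is correct, and since the paper states Theorem 3.3 as a quoted result from Uhlenbeck with no proof given, your second-variation computation is exactly the standard argument underlying the citation: $\overline{\mathrm{Ric}}(\mathbf{n},\mathbf{n})=-2$, $|A|^2=2\lambda^2$ with $\lambda^2=t^2|\alpha|^2 g_\sigma^{-2}e^{-4u}$, and the conformal bookkeeping $\Delta_f=e^{-2u}\Delta$, $dA_f=e^{2u}dA_\sigma$ give $e^{2u}J\phi=L\phi$ and hence the exact identity of the two quadratic forms, so stability of the immersion is equivalent to $L\ge 0$. Your closing caveat correctly addresses the one genuinely delicate point, namely that $L$ and $J$ are compared through different area forms, so nonnegativity transfers via the quadratic forms rather than via a naive operator identity.
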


From the analytic point of view, the linear operators $L(u,t)$ in \eqref{L} and $dF(u,t)$ in \eqref{dF} are 
important in order to apply the implicit function theorem. 
When the {\lo} $L$ has all positive eigenvalues, the {\dio} $dF$ is onto. When
zero is the lowest eigenvalue for $L$, its kernel and cokernel are one-dimensional, and $dF$ is still onto.

\medskip

We easily see that there exists a constant $\tau := \tau (\sigma,\alpha)$, such that 
at any solution $F(u,t) =0$ with $t > \tau$,  the first eigenvalue of $L(u,t)$ is negative. Indeed 
since any solution satisfies $u < 0$ (as in Theorem~\ref{thm:Ciao}), from
\begin{equation*}
   e^{2u(t)} - \frac{t^2|\alpha|^2}{g_\sigma^2}e^{-2u(t)} < 1- \frac{t^2|\alpha|^2}{g_\sigma^2} ,
\end{equation*}
we readily see that for large $t$ the first eigenvalue of $L(u,t)$ is negative at any possible solution.
On the other hand, we have $F(0,0)=0$ and we immediately see that $L(0,0) >0$. Hence by applying the implicit function theorem, starting from this trivial solution, one obtains a smooth solution curve $\gamma$. More specifically we obtain
\begin{theorem}\cite{Uhl83} \label{thm:Karen}
There exists a smooth curve 
$$
   \gamma: [0,\tau_0] \to  W^{2,2} (S) \times [0,\infty)
   \qquad
   t \mapsto (u(t),t),
$$
such that 
\begin{enumerate}
\item[(a)]
$\gamma(0) = (0,0)$ and $F(\gamma(t) ) = 0$ for all $ t \in [0, \tau_0]$,
\item[(b)]
$L(u(t), t) > 0 $ for all $t \in [0, \tau_0)$,
\item[(c)]
${\rm Ker} \big( L(u(\tau_0), \tau_0) \big) \not = \{0 \}$. 
\end{enumerate}
%
\end{theorem}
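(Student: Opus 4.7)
My plan is to construct $\gamma$ by applying the implicit function theorem at $(u,t) = (0,0)$ and extending the resulting smooth branch as far as the linearized operator $L$ remains invertible. At the trivial solution one has $F(0,0)=0$ directly, and from \eqref{L} the linearization is $L(0,0) = -\Delta + 2$, which is self-adjoint and strictly positive on $W^{2,2}(S)$ with compact resolvent. Consequently $\partial_u F(0,0) = -L(0,0)$ is a Banach space isomorphism $W^{2,2}(S) \to L^2(S)$, and the implicit function theorem produces a unique smooth branch $t \mapsto u(t)$ on some $[0,\delta)$ with $F(u(t),t) \equiv 0$ and $u(0)=0$. By analytic perturbation theory the first eigenvalue $\lambda_1(t) := \lambda_1\!\left(L(u(t),t)\right)$ depends continuously on $t$, so $\lambda_1(t) > 0$ for all $t$ near $0$, establishing (a) and (b) on some initial interval.

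Next I would set $\tau_0$ to be the supremum of those $T>0$ for which the branch extends smoothly to $[0,T]$ with $\lambda_1(t) > 0$ throughout. Wherever $\lambda_1(t)>0$ the operator $L(u(t),t)$ is a positive isomorphism, hence $-\partial_u F$ is invertible and the implicit function theorem lets us prolong the curve past $t$; in particular, $L(u(\tau_0),\tau_0)$ cannot remain strictly positive, for otherwise one could extend beyond $\tau_0$ and contradict its definition. Combined with continuity of $\lambda_1$, this forces $\lambda_1(\tau_0)=0$, which is exactly assertion (c). To see that $\tau_0$ is finite and that the branch really terminates at a point where $\lambda_1$ vanishes rather than running off to $t=+\infty$ with $\lambda_1 \searrow 0$, I would invoke Theorem \ref{thm:Ciao}(b): for $t$ sufficiently large, \eqref{t-ge} has no solutions whatsoever, so the branch cannot reach arbitrarily large $t$.

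The principal technical obstacle is the a priori control on $u(t)$ needed to pass to the limit as $t \nearrow \tau_0$ and to realize $(u(\tau_0),\tau_0)$ as a bona fide solution to which eigenvalue perturbation theory applies. Theorem \ref{thm:Ciao}(a) supplies the upper bound $u(t) \leq 0$ for free, and integrating \eqref{t-ge} over $S$ yields a uniform $L^1$ bound on $t^2 |\alpha|^2 g_\sigma^{-2} e^{-2 u(t)}$. For the matching lower bound on $u(t)$, I would differentiate the identity $F(u(t),t) \equiv 0$ in $t$ to obtain
\[
L(u(t),t)\, \dot u(t) \;=\; -\, 2t\, \frac{|\alpha|^2}{g_\sigma^2}\, e^{-2 u(t)},
\]
and use the strict positivity of $L$ on $[0,\tau_0)$ to control $\dot u(t)$. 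The delicate point is that $\lambda_1(t) \to 0$ at the endpoint, so $L^{-1}$ becomes unbounded and one cannot close the estimate by inversion alone near $\tau_0$; instead, one combines the maximum principle at a minimum of $u(t)$ (where the term $- t^2 |\alpha|^2 g_\sigma^{-2} e^{-2 u(t)}$ cannot be too negative) with a local analysis near the zeros of $\alpha$ to extract a uniform pointwise lower bound, after which standard elliptic regularity applied to \eqref{t-ge} produces the $W^{2,2}$ convergence $u(t) \to u(\tau_0)$ required to conclude.
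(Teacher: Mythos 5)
Your overall route---implicit function theorem at $(0,0)$, where $F(0,0)=0$ and $L(0,0)=-\Delta+2>0$ is an isomorphism $W^{2,2}(S)\to L^2(S)$, continuation of the branch while the first eigenvalue $\lambda_1(t)$ of $L(u(t),t)$ stays positive, finiteness of $\tau_0$ via the nonexistence statement of Theorem~\ref{thm:Ciao}(b), and degeneration of $L$ at the endpoint giving (c)---is exactly the argument the paper records in the two paragraphs preceding Theorem~\ref{thm:Karen}. The paper attributes the detailed proof to \cite{Uhl83} and offers only this continuation sketch, so up to that level of detail your proposal and the paper coincide.

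The gap is in the step where you go beyond the sketch: the uniform lower bound on $u(t)$ needed to pass to the limit as $t\nearrow\tau_0$. The minimum principle yields an inequality in the wrong direction: at an interior minimum $x_0$ of $u(t)$ one has $\Delta u(x_0)\geq 0$, so \eqref{t-ge} gives
\begin{equation*}
  t^2\,\frac{|\alpha(x_0)|^2}{g_\sigma^2(x_0)}
  \;\geq\;
  e^{2u(x_0)}\bigl(1-e^{2u(x_0)}\bigr),
\end{equation*}
and the right-hand side tends to $0$ as $u(x_0)\to-\infty$; the constraint becomes vacuous precisely in the blow-up regime, so no pointwise lower bound can be extracted this way, with or without local analysis near the zeros of $\alpha$ (near such zeros the left-hand side is small and the inequality says even less). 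Nor can any argument based on the equation alone succeed: Proposition~\ref{alt} of the paper exhibits solution sequences with $\|u_n\|_\infty\to\infty$, so a bound valid for all solutions of \eqref{t-ge} does not exist, and the compactness at $\tau_0$ must exploit the stability $L(u(t),t)>0$ of the branch, which your estimate never invokes. For instance, testing the stability inequality with $\xi\equiv 1$ gives $\int_S t^2\frac{|\alpha|^2}{g_\sigma^2}e^{-2u(t)}\leq\int_S e^{2u(t)}$, which combined with the integrated equation \eqref{eq:Giessen} yields the uniform bound $\int_S t^2\frac{|\alpha|^2}{g_\sigma^2}e^{-2u(t)}\leq \pi(2g-2)$; moreover, since $L\dot u=-2t\frac{|\alpha|^2}{g_\sigma^2}e^{-2u}\leq 0$ and $L^{-1}$ is positivity-preserving while $\lambda_1>0$, the branch is monotone, $\dot u\leq 0$. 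Estimates of this stability-based kind, not the maximum principle, are what feed the elliptic theory needed to obtain $u(t)\to u(\tau_0)$ in $W^{2,2}$ and hence conclusion (c). As written, your endpoint argument does not close.
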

Note that by standard regularity theory, the solution $u(t)$ obtained in the above theorem belongs to $C^{\infty}(S)$.

\subsection{Variational setting}
In the next three subsections, we prove our main theorems. Since we focus on finding additional solutions, we will not
use the parametrization of $t$ in these subsections.

In this subsection, we develop the variational setting of the problem. We will introduce a different but
equivalent norm to make use of the mountain pass theorem.

Setting $V(z) =  \frac{t^2 |\alpha|^2}{g_\sigma^2} \geq 0$, the {\gse} \eqref{ge} is given by:
\begin{equation*}
\Delta u + 1 -e^{2u} - V(z)e^{-2u} = 0,
\quad
 \hbox{ on } (S,\sigma),
\end{equation*}
which is the Euler-Lagrange equation of
\begin{equation} \label{eq:Identitat}
  I(u) :=
  {\frac12}\int_{S}|\nabla u|^2 - \int_{S} \left(u - {\frac{e^{2u}}{2}} \right) -\int_{S}V(z){\frac{e^{-2u}}{2}},
  \quad
  u \in H^1 (S).
\end{equation}
To derive compactness property, we introduce an equivalent norm on the Sobolev space and consider a new
functional whose set of critical points coincides with the one of~\eqref{eq:Identitat}.

To reach this goal, by choosing $\theta >2$, we define a function $F_1 \in C^{\infty} (\mathbb R)$ which satisfies
\begin{equation*}
  F_1 (s) := \left\{
  \begin{array}{cl}
    s - \frac{1}{2} e^{2 s}      &\hbox{ if } s \leq 0
    \vspace{5pt} \\
    - s^{\theta}                        &\hbox{ if } s > 1
  \end{array} \right.
  \qquad
  F_1' (s) < 0 \quad \forall s > 0 \,,
\end{equation*}
and $F_2 \in C^{\infty} (\mathbb R)$ defined as
\begin{equation*}
  F_2 (s) := \left\{
  \begin{array}{cl}
    \frac{1}{2} ( s^2  + e^{-2 s} )   & \hbox{ if } s \leq 0
    \vspace{3pt} \\
    0                                 & \hbox{ if } s > 1
  \end{array} \right.
  \qquad
  F_2' \leq 0 .
\end{equation*}

Setting $f_i (s):= F_i' (s)$ ($i=1,2$), we explicitly have
\begin{equation*}
  f_1 (s) := \left\{
  \begin{array}{cl}
    1 - e^{2s}                     & \hbox{ if } s \leq 0 \\
    -\theta s^{\theta -1}          & \hbox{ if } s > 1
  \end{array} \right.
  \qquad
  f_2 (s) := \left\{
  \begin{array}{cl}
    s - e^{-2 s}        & \hbox{ if } s \leq 0 \\
    0                   & \hbox{ if } s > 1
  \end{array}, \right.
\end{equation*}
which coincide for $ s \leq 0$ with the nonlinearities arising in the Gauss equation~\eqref{ge},
and have the following property:
\begin{equation} \label{eq:fProp}
  f_1 (s) < 0 \quad \forall s > 0,
  \qquad
  f_2 (s) \leq 0
  \quad \forall s \in \mathbb R .
\end{equation}
\begin{pro}\label{NG}
The {\gse} ~\eqref{ge} is equivalent to the new equation
\begin{equation} \label{eq:NewGauss}
  - \Delta u + V(z) u - \big( f_1 (u) +  V(z) f_2 (u) \big) = 0  \,.
\end{equation}
\end{pro}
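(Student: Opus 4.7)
My plan is to prove the equivalence by a two-sided inclusion of the solution sets, using in one direction the fact (already established in Theorem~\ref{thm:Ciao}(a)) that every solution of~\eqref{ge} satisfies $u\le 0$, and in the other direction the properties~\eqref{eq:fProp} of $f_1,f_2$ together with the maximum principle.

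\smallskip

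\emph{Step 1: Solutions of \eqref{ge} solve \eqref{eq:NewGauss}.} By construction, the restrictions of $f_1$ and $f_2$ to $(-\infty,0]$ are exactly $f_1(s)=1-e^{2s}$ and $f_2(s)=s-e^{-2s}$. Hence for $u\le 0$ on $S$ we can expand
\begin{equation*}
   f_1(u)+V(z)f_2(u)=\bigl(1-e^{2u}\bigr)+V(z)\bigl(u-e^{-2u}\bigr),
\end{equation*}
so that \eqref{eq:NewGauss} collapses to
\begin{equation*}
   -\Delta u-1+e^{2u}+V(z)e^{-2u}=0,
\end{equation*}
which is \eqref{ge}. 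Combined with Theorem~\ref{thm:Ciao}(a), every solution of \eqref{ge} is a solution of \eqref{eq:NewGauss}.

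\smallskip

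\emph{Step 2: Solutions of \eqref{eq:NewGauss} satisfy $u\le 0$.} This is the only nontrivial point. Let $u\in C^2(S)$ solve \eqref{eq:NewGauss} and let $x_0\in S$ be a maximum point of $u$. Then $\Delta u(x_0)\le 0$, and \eqref{eq:NewGauss} rearranges as
\begin{equation*}
   \Delta u(x_0)=V(x_0)\,u(x_0)-f_1\bigl(u(x_0)\bigr)-V(x_0)\,f_2\bigl(u(x_0)\bigr).
\end{equation*}
Suppose for contradiction that $u(x_0)>0$. Since $V\ge 0$, the term $V(x_0)u(x_0)$ is nonnegative; by \eqref{eq:fProp}, $-f_1(u(x_0))>0$ and $-V(x_0)f_2(u(x_0))\ge 0$. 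Thus the right-hand side is strictly positive, contradicting $\Delta u(x_0)\le 0$. Hence $u(x_0)\le 0$, so $u\le 0$ on $S$. By Step~1 the equation \eqref{eq:NewGauss} then coincides with \eqref{ge} at $u$, so $u$ solves~\eqref{ge}.

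\smallskip

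The only mildly delicate point is the maximum-principle argument in Step~2: one has to use that the strict inequality $f_1(s)<0$ for $s>0$ forces the right-hand side to be strictly positive at any putative positive maximum, regardless of whether $V(x_0)$ vanishes or not (recall $\alpha$ has zeros on $S$). Once this is in hand, the equivalence follows purely by substitution, since $f_1,f_2$ were designed to agree with the original nonlinearities precisely on the range $(-\infty,0]$ where the solutions live.
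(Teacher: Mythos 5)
Your proof is correct and takes essentially the same route as the paper: both directions rest on the fact that $f_1,f_2$ agree with the original nonlinearities on $(-\infty,0]$, combined with a maximum-principle argument showing that solutions of~\eqref{eq:NewGauss} are nonpositive. Your Step~2 simply spells out the sign analysis (using $f_1(s)<0$ for $s>0$, $f_2\le 0$, $V\ge 0$, with strictness of $f_1$ covering the zeros of $V$) that the paper compresses into ``apply the maximum principle and~\eqref{eq:fProp}''.
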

\begin{proof}
The {\maxp} easily implies that $u < 0$. From the explicit formulas of $f_1(u)$ and $f_2(u)$ for $u < 0$, it is easy
to verify that
\begin{eqnarray*}
-\Delta u + V(z)u -  f_1 (u) - V(z) f_2 (u) &=& - \Delta u -1 +e^{2u} + V(z)e^{-2u} \\
&=& 0.
\end{eqnarray*}
Now we apply the {\maxp} on ~\eqref{eq:NewGauss}, and make use of the properties ~\eqref{eq:fProp}, we easily
see that the solutions of ~\eqref{eq:NewGauss} are negative. Hence the sets of solutions of~\eqref{ge} and
~\eqref{eq:NewGauss} coincide.
\end{proof}

We will work with ~\eqref{eq:NewGauss}, since it admits a variational formulation that satisfies compactness
property. More specifically, we consider the usual Sobolev space
$$
   H^1 (S) := \{ u \in L^2 (S) \, \colon \, \nabla u \in L^2 (S) \},
$$
endowed with the following inner products
$$
   \langle f, g\rangle := \int_{S} \Big\{ \nabla f \nabla g + fg \Big\}.
\qquad
  \langle f, g \rangle_V :=
  \int_{S} \Big\{ \nabla f \nabla g + V(z) fg \Big\},
$$
and denote $\| \cdot \|_{H^1}$, $\| \cdot \|_V$ as their associated norms, respectively.

\begin{lem} \label{lem:Equivalent}
The norms $\| \cdot \|_{H^1}$, $\| \cdot \|_V$ are equivalent.
\end{lem}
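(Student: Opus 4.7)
The plan is to prove equivalence in two steps, the first direction being immediate and the second via a compactness argument. Since $\alpha \in Q(\sigma)$ is a holomorphic section on the compact surface $S$, the function $V(z) = t^2|\alpha|^2/g_\sigma^2$ is smooth and thus bounded: there exists $M > 0$ with $0 \leq V(z) \leq M$ on $S$. This immediately gives $\|f\|_V^2 \leq \max(1,M)\,\|f\|_{H^1}^2$ for all $f \in H^1(S)$, yielding one half of the equivalence.

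For the reverse direction, I would argue by contradiction. Suppose no constant $C > 0$ satisfies $\|f\|_{H^1} \leq C\,\|f\|_V$ for every $f \in H^1(S)$. Then there exists a sequence $\{f_n\} \subset H^1(S)$ with $\|f_n\|_{H^1} = 1$ and $\|f_n\|_V \to 0$. This forces both $\int_S |\nabla f_n|^2 \to 0$ and $\int_S V(z)\, f_n^2 \to 0$, and consequently $\int_S f_n^2 \to 1$. The sequence $\{f_n\}$ is bounded in $H^1(S)$, so by Rellich-Kondrachov a subsequence converges strongly in $L^2(S)$ to some $f \in L^2(S)$ with $\int_S f^2 = 1$, and weakly in $H^1(S)$ to the same limit.

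The key step is to identify $f$. Weak lower semicontinuity of the gradient norm gives $\int_S |\nabla f|^2 \leq \liminf \int_S |\nabla f_n|^2 = 0$, so $f$ is constant on the connected surface $S$, and the normalization $\int_S f^2 = 1$ forces this constant to be nonzero. On the other hand, since $V \geq 0$ and $f_n \to f$ in $L^2$ with $V$ bounded, I can pass to the limit to obtain $\int_S V(z)\, f^2 \leq \liminf \int_S V(z)\, f_n^2 = 0$. Writing $f \equiv c \neq 0$, this reads $c^2 \int_S V(z)\,dA = 0$, which forces $\int_S V = 0$.

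The final obstacle—which is really only superficial—is that $V$ vanishes at the (finitely many) zeros of the holomorphic quadratic differential $\alpha$, so the bound $V \geq \delta > 0$ fails pointwise. However, since $\alpha$ is holomorphic and not identically zero (I may assume $\alpha \neq 0$, else the equivalence is trivial on the zero $V$), its zero set is a finite set of points, and $V$ is strictly positive on an open set of full measure. Hence $\int_S V(z)\,dA > 0$, contradicting $\int_S V = 0$. This contradiction establishes the existence of the required constant $C$, completing the proof.
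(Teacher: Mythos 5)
Your proof is correct, and it takes a genuinely different route from the paper's. The paper argues directly and quantitatively: setting $\bar u := \frac{1}{|S|}\int_S u$, it controls $\|u - \bar u\|_{L^2}$ by the Poincar\'e inequality, then bounds the mean via
$|\bar u|^2 \leq C \left( \int_S V \right)^{-1} \int_S \left\{ |\nabla u|^2 + V u^2 \right\}$,
and assembles $\|u\|_{H^1} \leq C\|u\|_V$ with an explicit constant depending only on the Poincar\'e constant, $\|V\|_{\infty}$, and $\int_S V$. You instead run a soft compactness argument: a normalized sequence violating the inequality converges (Rellich--Kondrachov plus weak lower semicontinuity of the Dirichlet energy) to a nonzero constant $c$ with $c^2 \int_S V = 0$, and you then derive $\int_S V > 0$ from the fact that a nonzero holomorphic quadratic differential has only finitely many zeros. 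Both proofs ultimately rest on the same two facts --- $V \in L^{\infty}(S)$ and $\int_S V > 0$ --- but the paper's version buys an effective constant (potentially useful for uniformity as $t$ varies), while yours is shorter in spirit and more modular, at the cost of being non-constructive. One small slip worth fixing: your parenthetical that the case $\alpha \equiv 0$ is ``trivial'' is backwards. If $V \equiv 0$ (i.e.\ $\alpha \equiv 0$ or $t = 0$), then $\|\cdot\|_V$ degenerates to the gradient seminorm, which vanishes on constants and is \emph{not} equivalent to $\|\cdot\|_{H^1}$; the lemma implicitly requires $\int_S V > 0$, which the paper's proof invokes explicitly and which holds in the intended application since $t \in (0,\tau_0)$ and $\alpha \not\equiv 0$. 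You should state this as a standing hypothesis rather than dismiss the degenerate case.
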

\begin{proof}
Since $V \in L^{\infty} (S)$, we clearly have $\| \cdot \|_V \leq C \| \cdot \|_{H^1}$.

Setting $\bar u:= \frac{1}{|S|} \int_S u$, where $|S|$ is the hyperbolic area of the surface, we have
\begin{equation} \label{eq:Poincare}
  \| u - \bar u \|_{L^2} \leq  C \| \nabla u \|_{L^2}
\end{equation}
from the Poincar\'e inequality. Furthermore,
\begin{eqnarray*}
  \int_S V(z) |\bar u|^2
  &\leq&
  2 \int_S
  \Big\{ V(z) |\bar u - u|^2 + V(z) u^2 \Big\}
  \\
  &\leq&
  C \int_S \Big\{ |\bar u - u|^2 + V(z) u^2 \Big\}
  \\
  &\leq&
  C \int_S \Big\{ | \nabla u |^2 + V(z) u^2 \Big\}.
\end{eqnarray*}
Therefore, since $\int_S V >0$ we get
\begin{equation} \label{eq:Cagliari}
  |\bar u|^2  \leq C \Big( \int_S V(z) \Big)^{-1}
  \int_S \Big\{ | \nabla u |^2 + V(z) u^2 \Big\}
  \leq C \| u \|_V .
\end{equation}
Using then ~\eqref{eq:Poincare} and~\eqref{eq:Cagliari}, we conclude
$$
  \| u \|_{L^2}
  \, \leq \,
  \| u - \bar u \|_{L^2} + \| \bar u \|_{L^2}
  \, \leq \,
  C \| u \|_V \,.
$$
This immediately implies $\| u \|_{H^1} \leq C \| u \|_V$, which completes the proof.
\end{proof}
Now we can define the associated functional for the equation ~\eqref{eq:NewGauss}. In the Hilbert space
$H^1 (S)$, the functional
\begin{equation}\label{F}
    \mathcal{F} (u) :=  \frac{1}{2}
     \int_S \big\{  |\nabla u|^2 + V(z) u^2 \big\}
   - \int_S \big\{ F_1 (u) + V(z) F_2 (u)   \big\} ,
   \quad
   u \in H^1 (S) ,
\end{equation}
is by the Moser-Trudinger inequality well defined, of class $C^1$, and its critical points are weak solutions of
~\eqref{eq:NewGauss}. In this functional setting we will be able to use a minimax argument to derive a
second solution to the {\gse}.

\subsection{Mountain pass structure}
In this subsection, we show that the functional $\Fcal$ exhibits a mountain pass geometry.
We need the following compactness property,
\begin{theorem} \label{PS}
The functional $\mathcal{F}$ satisfies the Palais-Smale condition, i.e., any sequence $\{u_n\}$ in $H^1 (S)$
satisfying,
\begin{equation} \label{eq:PS}
  |\mathcal{F} ( u_n ) | \leq C ,
  \qquad
  \|\mathcal{F} '( u_n) \|_{H^{-1}} \to 0 ,
\end{equation}
admits a subsequence converging strongly in $H^1 (S)$.
\end{theorem}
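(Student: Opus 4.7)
The plan is the three standard steps of any Palais-Smale proof: (i) bound $\{u_n\}$ in $H^1(S)$, (ii) pass to a weakly convergent subsequence, and (iii) upgrade weak to strong convergence. The truncations introduced in the definitions of $F_1$ and $F_2$ are tailored precisely so that each of these steps goes through.

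For step (i), I would carry out an Ambrosetti--Rabinowitz-type computation. Because $\theta > 2$, the truncated nonlinearities satisfy
\[
   F_1(s) - \tfrac{1}{\theta} s f_1(s) \le C,
   \qquad
   F_2(s) - \tfrac{1}{\theta} s f_2(s) \le C,
   \qquad \forall s \in \R,
\]
as one verifies from the explicit formulas: both expressions vanish identically for $s > 1$ by the choice $F_1(s) = -s^\theta$ and $F_2 \equiv 0$; for $s \le 0$ they stay bounded above because the linear/quadratic parts dominate the exponential corrections with the correct sign; and the transition interval $0 \le s \le 1$ is harmless by continuity. Combined with the identity
\[
   \Fcal(u_n) - \tfrac{1}{\theta} \langle \Fcal'(u_n), u_n\rangle
   = \left(\tfrac{1}{2} - \tfrac{1}{\theta}\right) \|u_n\|_V^2
   - \int_S \Bigl(F_1(u_n) - \tfrac{1}{\theta} u_n f_1(u_n)\Bigr)
   - \int_S V \Bigl(F_2(u_n) - \tfrac{1}{\theta} u_n f_2(u_n)\Bigr),
\]
the hypotheses \eqref{eq:PS} yield $\|u_n\|_V^2 \le C + \epsilon_n \|u_n\|_{H^1}$, and Lemma~\ref{lem:Equivalent} then gives a uniform $H^1$-bound.

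For steps (ii) and (iii), Rellich--Kondrachov supplies a subsequence with $u_n \rightharpoonup u$ in $H^1(S)$, $u_n \to u$ strongly in every $L^p(S)$ with $p<\infty$, and pointwise a.e. Expanding
\[
   \langle \Fcal'(u_n) - \Fcal'(u), u_n - u\rangle
   = \|u_n - u\|_V^2
   - \int_S \bigl(f_1(u_n) - f_1(u)\bigr)(u_n - u)
   - \int_S V \bigl(f_2(u_n) - f_2(u)\bigr)(u_n - u),
\]
the left-hand side tends to zero by $\Fcal'(u_n) \to 0$ in $H^{-1}$ and the weak convergence $u_n \rightharpoonup u$, so it suffices to prove that the two integrals on the right vanish in the limit. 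The first one does so straightforwardly from the polynomial growth of $f_1$, the $L^p$-convergence of $u_n$, and a Vitali-type argument.

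The main obstacle is handling the second integral: $f_2(s) \sim -e^{-2s}$ as $s \to -\infty$, so the $H^1$-bound on $\{u_n\}$ does not control $f_2(u_n)$ through polynomial Sobolev embeddings alone. Here the Moser--Trudinger inequality on the closed surface $(S, g_\sigma)$ is essential: from a uniform $H^1$-bound it furnishes a constant $\beta > 0$ with $\sup_n \int_S e^{\beta u_n^2} < \infty$, and the elementary estimate $\alpha t \le \beta t^2 + \alpha^2/(4\beta)$ then yields uniform $L^p$-bounds on $e^{-2 u_n}$, hence on $f_2(u_n)$, for every $p < \infty$. Combining this uniform integrability with the a.e.\ convergence $u_n \to u$, Vitali's theorem gives $f_2(u_n) \to f_2(u)$ strongly in $L^2(S)$, and the Cauchy--Schwarz inequality closes the argument, producing $\|u_n - u\|_V \to 0$ and hence, by Lemma~\ref{lem:Equivalent}, $\|u_n - u\|_{H^1} \to 0$.
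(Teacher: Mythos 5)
Your proposal is correct and follows essentially the same route as the paper's proof: the Ambrosetti--Rabinowitz-type inequality $F_i(s) \leq \frac{s}{\theta} f_i(s) + O(1)$ for the truncated nonlinearities to bound $\|u_n\|_V$, weak convergence, and the Moser--Trudinger inequality to obtain strong $L^2$-convergence of $f_2(u_n)$ (the paper packages this as compactness of $u \mapsto e^u$, you reprove it via uniform $L^p$-bounds on $e^{-2u_n}$ and Vitali), concluding with the norm equivalence of Lemma~\ref{lem:Equivalent}. If anything, your boundedness step is slightly more careful than the paper's: in \eqref{eq:PS2} the paper writes $o(1)$ where the hypothesis $\|\Fcal'(u_n)\|_{H^{-1}} \to 0$ only gives $\langle \Fcal'(u_n), u_n\rangle = o(\|u_n\|_{H^1})$, and your estimate $\|u_n\|_V^2 \leq C + \epsilon_n \|u_n\|_{H^1}$ handles this point correctly.
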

\begin{proof}
Consider the exponent $\theta > 2$ appearing in the definition $F_1$. We claim that
$$
  F_1 (s)
  \, \leq \,
  \frac{s}{\theta}  f_1 (s) + O(1),
  \qquad
  F_2 (s)
  \, \leq \,
  \frac{s}{\theta} f_2 (s)  + O(1) \,.
$$
Indeed from our definition of $F_i$ ($i=1,2$), note first that for all $s \in (1,\infty)$
$$
  F_1(s) = \frac{1}{\theta} s f_1(s) ,
  \qquad
  F_2 (s) = s f_2 (s) =0.
$$
Secondly, for $s \in [0,1]$ we obviously have:
$$
   F_i (s) \leq s f_i (s) + O(1) \quad (i=1,2).
$$
Thirdly, for $s <0$ we note that
$$
  F_1 (s) \leq s \leq \frac{s}{\theta}
  \, = \,
  \frac{s}{\theta} (1 -e^{2s}) + O(1)
  \, = \,
  \frac{s}{\theta} f_1(s) + O(1)  ,
$$
$$
  F_2 (s)
  \, \leq \,
  e^{-2s}
  \, \leq \,
  \frac{1}{\theta} (-s) e^{-2s} + O(1)
  \, \leq \frac{s}{\theta} f_2 (s) + O(1) \,.
$$
Hence, we have:
\begin{equation} \label{eq:ARCondition}
   F_1 (s) + V(z) F_2 (s)
   \, \leq \,
   \frac{s}{\theta}  \Big( f_1 (s) + V(z) f_2 (s) \Big)  + O(1) \,.
\end{equation}

To proceed with the proof, given a sequence $\{u_n\}$ satisfying~\eqref{eq:PS}, we prove that it is bounded.
Condition~\eqref{eq:PS} implies that
\begin{equation}
  \frac{1}{2} \| u_n \|_V^2 - \int_S \Big\{ F_1 (u_n) + V(z) F_2 (u_n) \Big\} = O(1)
  \label{eq:PS1}
\end{equation}
\vspace{-2mm}
\begin{equation}
  \|u_n \|_V^2 - \int_S  u_n \Big\{ f_1 (u_n) + V(z) f_2 (u_n)  \Big\}   = o(1)
  \label{eq:PS2}.
\end{equation}
Using successively~\eqref{eq:PS1}, \eqref{eq:ARCondition} and~\eqref{eq:PS2}, we deduce
\begin{eqnarray*}
  \frac{1}{2} \|u_n \|_V^2
  &\leq&
  C + \int_S \Big\{ F_1 (u_n) + V(z) F_2 (u_n) \Big\}
  \\
  &\leq&
  C + \frac{1}{\theta} \int_S u_n \Big\{ f_1 (u_n) + V(z) f_2 (u_n)  \Big\}  \\
  &=&
  O(1) + \frac{1}{\theta} \|u_n\|_V^2 \,.
\end{eqnarray*}

Therefore for some $\theta > 2$, we have
$$
  \left( \frac{1}{2} - \frac{1}{\theta} \right) \|u _n \|_V^2 = O(1).
$$
So $\|u _n \|_V = O(1)$, and therefore we have $u_n \rightharpoonup \hat u$ weakly in $H^1 (S)$ (up to a subsequence).

Furthermore, we have
\begin{equation} \label{eq:CiSaraSperanza}
  \| u_n - \hat u \|_V^2=
  \int_S \Big\{ f_1 (u_n) (u_n - \hat u) + V(z) f_2 (u_n) (u_n - \hat u) \Big\}
  + o(1) .
\end{equation}
By Lebesgue dominated convergence, we readily have $f_1 (u_n) \to f_1 (\hat u)$ strongly in $L^2 (S)$. Moreover, using
the Moser-Trudinger inequality, it is known that the map
$$
  H^1 (S) \to L^2
  \quad
  u \mapsto e^u
$$
is compact. Therefore we also have $f_2 (u_n) \to f_2 (\hat u)$ strongly in $L^2(S)$. Hence~\eqref{eq:CiSaraSperanza}
implies that $\| u_n - \hat u \|_{V} = o(1)$.
By Lemma \ref{lem:Equivalent}, we conclude that $\| u_n - \hat u \|_{H^1} = o(1)$.
\end{proof}

We now show that the functional $\Fcal$ admits mountain pass type solutions
\begin{pro}\label{TwoSolutions}
For each $t \in (0, \tau_0)$ where $\tau_0$ is defined in Theorem~\ref{thm:Karen},
the functional $\mathcal{F}$ admits at least two solutions.
\end{pro}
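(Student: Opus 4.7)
The plan is to apply a classical mountain pass argument to the $C^1$ functional $\mathcal{F}$ of~\eqref{F}, using Uhlenbeck's solution $u(t)$ from Theorem~\ref{thm:Karen} as a strict local minimum and constructing a second critical point at a strictly higher energy level. Compactness is provided by Theorem~\ref{PS}, and by Proposition~\ref{NG} every critical point of $\mathcal{F}$ is a solution of \eqref{t-ge}, so two distinct critical points yield two distinct minimal immersions.

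First I would show that for $t\in(0,\tau_0)$ the Uhlenbeck solution $u(t)$ is a strict local minimum of $\mathcal{F}$. Since $u(t)<0$ by Theorem~\ref{thm:Ciao}(a), the modified nonlinearities $f_i$ agree with the original ones in a neighborhood of $u(t)$, and a direct computation gives
\[
   \mathcal{F}''(u(t))[v,v]
   = \int_S |\nabla v|^2 + 2\int_S \bigl(e^{2u(t)} - V(z)e^{-2u(t)}\bigr) v^2
   = \langle L(u(t),t)v,v\rangle_{L^2},
\]
with $L$ as in~\eqref{L}. On the closed surface $S$, $L(u(t),t)$ is a self-adjoint Schr\"odinger-type operator with discrete spectrum, and Theorem~\ref{thm:Karen}(b) guarantees its first eigenvalue $\mu_1$ is strictly positive. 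Together with the norm equivalence of Lemma~\ref{lem:Equivalent}, a Taylor expansion then yields constants $r,\delta>0$ with
\[
  \inf_{\|w-u(t)\|_V = r} \mathcal{F}(w) \geq \mathcal{F}(u(t)) + \delta.
\]

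Second I would exhibit a point far away with strictly lower energy. Evaluating $\mathcal{F}$ on the constant function $-M$ for $M>0$ large, the modified potentials yield
\[
  \mathcal{F}(-M) = M\,|S| + \tfrac{|S|}{2}e^{-2M} - \tfrac{1}{2}e^{2M}\!\int_S V(z),
\]
and since $t>0$ implies $V\not\equiv 0$, the exponentially large negative term forces $\mathcal{F}(-M)\to-\infty$. Choose $M$ so large that $v_\star := -M$ satisfies $\|v_\star - u(t)\|_V > r$ and $\mathcal{F}(v_\star)<\mathcal{F}(u(t))$.

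Finally I would invoke the mountain pass theorem with the minimax level
\[
  c := \inf_{\gamma\in\Gamma} \max_{s\in[0,1]} \mathcal{F}(\gamma(s)),
  \qquad
  \Gamma := \{\gamma\in C([0,1],H^1(S)) : \gamma(0)=u(t),\ \gamma(1)=v_\star\}.
\]
Every path in $\Gamma$ meets the sphere $\|w-u(t)\|_V=r$, so the barrier from Step 1 gives $c \geq \mathcal{F}(u(t))+\delta > \mathcal{F}(u(t))$. By the Palais-Smale condition (Theorem~\ref{PS}), the level $c$ is attained at a critical point $\tilde u$ of $\mathcal{F}$, and since $\mathcal{F}(\tilde u)=c>\mathcal{F}(u(t))$ we have $\tilde u \neq u(t)$; Proposition~\ref{NG} converts $\tilde u$ to the second solution of~\eqref{t-ge}. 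The main analytic obstacle is the first step: upgrading operator positivity of $L(u(t),t)$ to a uniform quadratic coercivity of $\mathcal{F}''(u(t))$ in the Hilbert norm $\|\cdot\|_V$. This is achievable because $S$ is compact, $L$ has discrete spectrum bounded below by $\mu_1>0$, and Lemma~\ref{lem:Equivalent} supplies the required norm equivalence, but it is the only nontrivial ingredient beyond the standard mountain pass machinery.
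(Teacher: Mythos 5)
Your proof is correct and follows essentially the same route as the paper: coercivity of $\mathcal{F}''$ at Uhlenbeck's solution $u(t)$ via positivity of the linearized operator $L(u(t),t)$ (Theorem~\ref{thm:Karen}(b)) together with Lemma~\ref{lem:Equivalent}, unboundedness of $\mathcal{F}$ below along negative constant functions, and the Ambrosetti--Rabinowitz mountain pass theorem backed by the Palais--Smale condition of Theorem~\ref{PS}, with Proposition~\ref{NG} converting critical points back into solutions of \eqref{t-ge}. If anything your write-up is slightly more careful than the paper's: you obtain the strict barrier $\inf_{\|w-u(t)\|_V=r}\mathcal{F}(w)\ge\mathcal{F}(u(t))+\delta$ where the paper records only a non-strict inequality, and your energy formula $\mathcal{F}(-M)=M|S|+\tfrac{|S|}{2}e^{-2M}-\tfrac12 e^{2M}\int_S V$ corrects a missing factor of $\tfrac12$ in the paper's displayed expression for $\mathcal{F}$ at a negative constant.
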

\begin{proof}
Using the definition $F_i$ $(i=1,2)$, we see that at each point $u \in H^1 (S)$ with $u \leq 0$ we have
$I (u) =  \mathcal{F}(u)$ and the second derivative at $u$ satisfy
\begin{equation} \label{eq:PJ}
   I''_{(u)} (\xi,\xi) = \mathcal{F}''_{(u)} (\xi,\xi)
  \quad \forall \xi \in H^1 (S) \,,
\end{equation}
and this bilinear form is explicitly given by
$$
  I''_{(u)} (\xi,\xi)
  =
  \int_S |\nabla  \xi|^2 +
  2 \int_S  \left(  e^{2u} -  t^2 \frac{|\alpha(x)|^2}{g_{\sigma}^2} e^{-2 u} \right) \xi^2 \,.
$$
Now at the stable solution $u (t)$ obtained by Uhlenbeck's Theorem~\ref{thm:Karen}, we note that for $t \in (0, \tau_0)$, 
standard results show that the first eigenvalue of the linearized operator $L (u(t),t)$ is given by the infimum 
of the Rayleigh quotient $ \frac{I''_{(u (t) )} (\xi,\xi)}{\|\xi \|^2_{H^1}}$ with $\xi \in H^1(S) \setminus \{0 \}$. 
Therefore for each $t \in (0, \tau_0)$, we deduce
\begin{equation} \label{eq:DoIt}
   I''_{(u (t) )} (\xi,\xi) \geq C \|\xi \|^2_{H^1}
   \quad \forall \xi \in H^1 (S) ,
\end{equation}
i.e. $u (t) $ is a local minimizer of the functional $I$. Since $u (t) <0$, equality~\eqref{eq:PJ}, with~\eqref{eq:DoIt}
and Lemma \ref{lem:Equivalent} imply
$$
  \mathcal{F}_{(u (t) )} (\xi, \xi) \geq \| \xi \|_V \,.
$$
Hence there exists a ball $B(u (t) , r)$ in the Hilbert space $H^1 (S)$ such that
\begin{equation} \label{eq:Mountain1}
  \inf_{u \in \partial B(u (t) , r) } \mathcal{F} (u) \geq \mathcal{F} (u (t) )  \,.
\end{equation}
Furthermore, take $w$ to be a constant negative function, we easily see that
$$
  \mathcal{F} (w) =  - \Big( w - \frac{1}{2}e^{2 w} \Big) |S| -e^{-2 w} \int_S V(z),
$$
namely
$$
  \lim_{w \to - \infty} \mathcal{F} (w) = - \infty .
$$
Hence, there exists $w \in H^1 (S)$ such that
\begin{equation} \label{eq:Mountain2}
   w \not \in B(u (t), r),
   \qquad
   \mathcal{F} (w) < \mathcal{F} (u (t)) \,.
\end{equation}
Since the Palais-Smale condition is satisfied, conditions \eqref{eq:Mountain1} and \eqref{eq:Mountain2} allow to apply the
Mountain-Pass Theorem of Ambrosetti-Rabinowitz \cite{AR73}.
\end{proof}
\begin{proof}[Proof of Theorem 1.2:] 
Now the theorem follows from the Proposition~\ref{TwoSolutions} and the Proposition~\ref{NG}.
\end{proof}
\subsection{The asymptotic geometry}
We show now that as $t \to 0$ the mountain pass solutions blow-up.
\begin{pro}\label{alt}
Let $(t_n, u_n)$ be a sequence of critical points with $t_n \to 0$.
Then, along a subsequence, the following alternative holds
\begin{enumerate}
\item[(i)]
 $\| u_n \|_{H^1} \to 0$;
\item[(ii)]
 or
 $\| u_n \|_{H^1} + \|u_n\|_{\infty} \to \infty$.
\end{enumerate}
\end{pro}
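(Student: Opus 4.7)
The plan is to argue by contradiction: suppose neither (i) nor (ii) holds along any subsequence. Then, after passing to a subsequence, both $\|u_n\|_{H^1}$ and $\|u_n\|_\infty$ are uniformly bounded while $\|u_n\|_{H^1}$ stays bounded away from zero, and the goal is to derive a contradiction by showing $u_n \to 0$ in $C^1(S)$.

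First I would record a two-sided $L^\infty$ bound. By Theorem~\ref{thm:Ciao}(a), every critical point satisfies $u_n \leq 0$, and combined with the assumed $L^\infty$ control there exists $M>0$ with $-M \leq u_n \leq 0$ on $S$.

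Next I would bootstrap via the equation. Rewriting \eqref{t-ge} as
\[
   \Delta u_n \;=\; -1 + e^{2u_n} + \frac{t_n^{2}\,|\alpha|^2}{g_\sigma^2}\,e^{-2 u_n},
\]
the right-hand side is uniformly bounded in $L^\infty(S)$ since $u_n \in [-M,0]$ and $t_n \to 0$. Standard $L^p$ elliptic estimates then yield a uniform $W^{2,p}(S)$ bound for every $p<\infty$, and Sobolev embedding upgrades this to a uniform $C^{1,\beta}(S)$ bound. Passing to a further subsequence, $u_n \to \hat u$ in $C^1(S)$, and (using $t_n \to 0$) the limit $\hat u$ solves
\[
   \Delta \hat u + 1 - e^{2 \hat u} = 0 \qquad \text{on } (S,\sigma).
\]

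To finish, I would show $\hat u \equiv 0$ by a short \maxp{} argument: at a maximum point $\Delta \hat u \leq 0$ gives $\hat u \leq 0$ globally, while at a minimum $\Delta \hat u \geq 0$ gives $\hat u \geq 0$ globally. This is just the uniqueness of uniformization on $(S,\sigma)$. Hence $u_n \to 0$ in $C^1(S)$, a fortiori in $H^1(S)$, contradicting that $\|u_n\|_{H^1}$ is bounded away from zero. The main (and mild) obstacle is the initial bootstrap from the assumed $L^\infty$ bound to higher regularity: the decay $t_n \to 0$ is essential in controlling the potentially singular $e^{-2u_n}$ term, since without it the coefficient $t_n^2|\alpha|^2/g_\sigma^2$ need not vanish in the limit. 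Once this is in place, the identification of the limit and the contradiction are both immediate.
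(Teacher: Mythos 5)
Your proof is correct, but it takes a genuinely different route from the paper's. The paper argues by a direct dichotomy on $\|u_n\|_{H^1}$, working entirely at the weak-$H^1$ level: if $\|u_n\|_{H^1} = O(1)$, it extracts a weak limit $\bar u$, uses Moser--Trudinger compactness ($e^{\pm u_n} \to e^{\pm \bar u}$ in $L^2$) to pass to the limit in the weak formulation, and identifies $\bar u = 0$ from the limit equation; if instead $\|u_n\|_{H^1} \to \infty$ along a subsequence, it tests the equation with $u_n$ and notes that a uniform $L^\infty$ bound would force $\int_S |\nabla u_n|^2 = O(1)$, a contradiction, so $\|u_n\|_\infty \to \infty$ and alternative (ii) holds. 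You instead negate both alternatives at once, which hands you a uniform two-sided $L^\infty$ bound for free, and then run the classical elliptic bootstrap ($L^\infty$ right-hand side $\Rightarrow$ uniform $W^{2,p}$ $\Rightarrow$ uniform $C^{1,\beta}$) to extract a $C^1$-convergent subsequence whose limit solves $\Delta \hat u + 1 - e^{2\hat u} = 0$, hence $\hat u \equiv 0$. Your route buys strong $C^1$ convergence to zero, which in particular makes fully explicit a point the paper's first case leaves implicit (the paper exhibits only the weak limit $\bar u = 0$, while alternative (i) asserts strong $H^1$ convergence; upgrading to strong convergence there is routine via the tested equation, but it is not written out); the price is the heavier $W^{2,p}$ elliptic machinery, where the paper needs only weak compactness and Moser--Trudinger. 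Two small remarks: first, your closing comment slightly misattributes the role of $t_n \to 0$ --- the bound $-M \leq u_n \leq 0$ already gives $e^{-2u_n} \leq e^{2M}$, so the a priori estimates would go through with $t_n$ merely bounded, and the decay $t_n \to 0$ is needed only to remove the potential term from the limit equation, as you in fact use; second, since the $u_n$ are critical points of $\mathcal{F}$ (with the truncated nonlinearities $f_1, f_2$), the bound $u_n \leq 0$ should strictly be routed through Proposition~\ref{NG}, which identifies critical points of $\mathcal{F}$ with (negative) solutions of the Gauss equation, after which Theorem~\ref{thm:Ciao}(a) applies as you say.
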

\begin{proof}
We have two possibilities: either $\| u_n \|_{H^1} = O(1)$ or $\| u_n \|_{H^1} \to \infty$ (up to a subsequence).

\medskip

{\bf Case 1:}
Assume $\| u_n \|_{H^1} = O(1)$. \\
Then $u_n \rightharpoonup \bar u$, and by the Moser-Trudinger inequality
we know that $e^{\pm u_n} \to e^{\pm \bar u}$ in $L^2 (S)$. Since for each $\xi \in C^{\infty} (S)$ we have
$$
  \int_{S} \nabla u_n \nabla \xi = \int_{S} \xi - \int_S \{ e^{u_n} + t_n^2 |\alpha (x)|^2 e^{-u_n} \} \xi ,
$$
for $n \to \infty$ we get
$$
  \int_{S} \nabla {\bar u} \nabla \xi = \int_{S} \xi - \int_S e^{ {\bar u} } \xi
$$
Therefore $- \Delta  {\bar u} = 1 - e^{ {\bar u} }$, which implies $\bar u = 0$.

\medskip

{\bf Case 2:} Assume $\| u_n \|_{H^1 (S)} \to  \infty$.\\
From the identity
\begin{equation} \label{eq:Lemonade}
  \int_{S} |\nabla u_n |^2 = \int_{S} u_n - \int_S \{ e^{u_n} + t_n^2{\frac{|\alpha|^2}{|g_{\sigma}|^2}} e^{-u_n} \} u_n,
\end{equation}
we immediately see that $\| u _n \|_{\infty} \to \infty$. Indeed, if this is not the case we would have
$\|u_n\|_L^2 = O(1)$ and \eqref{eq:Lemonade} would imply $\int_{S} |\nabla u_n |^2 \leq C$.
\end{proof}
\begin{proof}[Proof of Theorem 1.3:] Recall that the positive {\pc} of the {\mi} given by $u_n(t_n)$ is given by
\begin{equation*}
\lambda(t_n) = {\frac{t_ne^{2u_n}|\alpha|}{g_{\sigma}}}.
\end{equation*}
We use this to examine our options from the above Proposition \eqref{alt}. In the case one, the {\pc}s are small for $t$ near
zero. Since in a neighborhood of $(0,0)$, the functional admits a unique branch of solution $(t,u_t)$, we conclude
that $u_n$ coincides with the solution $u_t$ obtained by the implicit function theorem from Uhlenbeck's theorem. Since the
{\pc}s are small (less than one in absolute value), the normal bundle is an {\af} {\tm}.

\medskip

While in the case two, we find $|\lambda(t_n)| \to \infty$ as $t_n \to 0$. Indeed assume on the contrary that $\lambda (t_n)$
stays uniformly bounded as $t_n \to 0$. Let $x_n$ be such that $ \min u = u_{t_n} (x_n) \to - \infty$.
By the minimum principle we deduce that
$$
    1 - (1 + \lambda (t_n)^2) e^{2 u_{t_n} (x_n)}  \leq 0 .
$$
If $\lambda(t_n)$ stays uniformly bounded we get a contradiction. This proves Theorem 1.3.
\end{proof}

\bibliographystyle{amsalpha}
\bibliography{ref-gauss}
\end{document}